\date{}
\journal{RAIRO }
\newtheorem{theorem}{Theorem}[section]
\newtheorem{lemma}[theorem]{Lemma}
\newtheorem{proposition}[theorem]{Proposition}
\newenvironment{proof}[1][Proof]{\noindent\textbf{#1. }}{\ \rule{0.5em}{0.5em}}
\theoremstyle{definition}
\newtheorem{definition}[theorem]{Definition}
\newtheorem{remark}[theorem]{Remark}
\begin{document}

\begin{frontmatter}

\title{New concept of connection in signed graphs}

\author[add1]{Ouahiba Bessouf}
\ead{obessouf@yahoo.fr}
\author[add1]{Abdelkader Khelladi}
\address[add1]{Faculté de Mathématiques, USTHB BP 32 El Alia, Bab-Ezzouar 16111, Alger, Algérie}
\ead{kader{\_}khelladi@yahoo.fr}
\author[add2]{Thomas Zaslavsky}
\address[add2]{Department of Mathematical Sciences, Binghamton University, Binghamton, NY 13902-6000, U.S.A.}
\ead{zaslav@math.binghamton.edu}

\begin{abstract}
In a signed graph each edge has a sign, $+1$ or $-1$. We introduce in the present paper a new definition of connection in a signed graph by the existence of both positive and negative chains between vertices.  We prove some results and properties of this definition, such as sign components, sign articulation vertices, and sign isthmi, and we compare them to corresponding graph and signed-graphic matroid properties.  
We apply our results to signed graphs without positive cycles.  For signed graphs in which every edge is negative our properties become parity properties.
\end{abstract}
\begin{keyword}
signed graph, sign connection, frame matroid, graphic lift matroid, contrabalance, antibalance.
\end{keyword}

\thanks{Version of 27 January 2017}

\end{frontmatter}


	\section{Introduction}

A signed graph is a graph in which every edge has a sign, $+1$ or $-1$.  Signed graphs, as a generalization of ordinary undirected graphs, were introduced by Harary in 1954.  The aim of this paper is to introduce a new concept of connection of a signed graph, which we call sign connection; it means that every pair of vertices is joined by both a positive and a negative chain.  

We define and prove properties of sign connection such as elementary signed chains, sign isthmus, sign articulation vertex, sign block, etc., and we establish relationships to connection in the graph and in matroids of the signed graph.  We apply our results to the class of signed graphs without positive cycles.  We also explain how they apply to the class of signed graphs where all signs are negative; then we are asking about graphs in which every vertex pair be joined by both odd and even chains.  We do not know of previous work on this concept.  We conclude by discussing alternative notions of connection in a signed graph, in particular connection by chains of only one sign.

A main theorem is that sign connection, though a very natural concept, is equivalent to either graph connection or total disconnection, depending on the graph.  However, the properties we find of sign isthmi, which are more like matroid coloops than graph isthmi, and sign articulation vertices suggest that higher sign connectivity may be a new kind of connectivity, intermediate between graph and matroid connectivity, that will repay further examination.

	\section{Signed graphs}\label{signed}

We consider finite, undirected graphs $G$ with the vertex set $V=V(G)$ and the edge set $E=E(G)$.  A graph may have loops and multiple edges.  
The chains which are used in this paper are the usual chains (or walks) of undirected graphs as in \cite{bb}.  
All cycles are elementary, that is, without self-intersections.
An \emph{articulation vertex} in $G$ is a vertex $v$ such that there exist edges, $e$ and $f$, for which every chain from $e$ to $f$ passes through $v$.  For instance, a vertex that supports a loop is an articulation vertex unless it is incident with no other edge.  A \emph{block} of $G$ is a maximal subgraph with no articulation vertex; for instance, the subgraph induced by a loop is a block, and an isolated vertex is a block.  An \emph{isthmus} of $G$ is an edge $e$ such that $G-e$ has more connected components than $G$.

\begin{definition}\label{1}
{\rm
A \emph{signed graph} is a triple $(V, E; \sigma  )$ where $G = (V, E)$
is an undirected graph and $\sigma$ is  a signature of the edge set $E$:
\begin{align*}
\sigma : E  &\rightarrow \{ -1, +1\}\\
         e  &\mapsto \sigma(e)
\end{align*}
A signed graph is denoted by $G_{\sigma}= (V, E;\sigma )$.  Sometimes we write $+,-$ for signs instead of $+1,-1$.
}
\end{definition}

\begin{definition}\label{2}
{\rm
A cycle of a signed graph is \emph{positive} if the sign product of its edges is positive, or the number of its negative edges is even.  In the opposite case it is \emph{negative}.  
A signed graph is \emph{balanced} if all its cycles are positive; e.g., if it is cycle free (i.e., a forest).
}
\end{definition}

\begin{definition}\label{switch}
{\rm
\emph{Switching} a signed graph means reversing the signs of all edges between a vertex subset $W \subseteq V$ and its complement, $V-W$.  (The negated edge set may be void.)  We note that switching does not change the sign of a closed chain.
}
\end{definition}

Harary's balance theorems \cite{h} may be restated as follows:  

\begin{lemma}\label{balance}
A signed graph $G_{\sigma}$ is balanced if and only if it can be switched to have all positive signs.  Also, $G_{\sigma}$ is balanced if and only if for every $x,y \in V$ (possibly equal), every chain between $x$ and $y$ has the same sign.
\end{lemma}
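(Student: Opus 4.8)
The plan is to prove both asserted equivalences at once through a single cycle of implications among three conditions:
(A) $G_{\sigma}$ is balanced;
(B) $G_{\sigma}$ can be switched so that every edge is positive;
(C) for every $x,y\in V$ (possibly equal) all chains from $x$ to $y$ carry the same sign.
I will establish $\text{(A)}\Rightarrow\text{(B)}\Rightarrow\text{(C)}\Rightarrow\text{(A)}$. This yields the first statement of the lemma as $\text{(A)}\Leftrightarrow\text{(B)}$ and the second as $\text{(A)}\Leftrightarrow\text{(C)}$.

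Two of the three implications are short computations. For $\text{(B)}\Rightarrow\text{(C)}$, I would encode switching on a set $W$ by the vertex function $\zeta$ with $\zeta(v)=-1$ on $W$ and $\zeta(v)=+1$ elsewhere, so that the switched sign of an edge $uv$ is $\zeta(u)\sigma(uv)\zeta(v)$; telescoping along a chain $x=v_{0},e_{1},v_{1},\dots,e_{k},v_{k}=y$ then shows its switched sign equals $\zeta(x)\zeta(y)$ times its original sign, since every interior factor $\zeta(v_{i})^{2}=1$. If the switched graph is all-positive, every chain from $x$ to $y$ has switched sign $+1$, hence original sign $\zeta(x)\zeta(y)$, a value depending only on $x$ and $y$ (and equal to $+1$ when $x=y$). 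For $\text{(C)}\Rightarrow\text{(A)}$, take any cycle $C$ and any vertex $x$ on it: viewed as a closed chain from $x$ to $x$, by (C) it has the same sign as the trivial chain of length $0$ from $x$ to $x$, namely $+1$, so every cycle is positive.

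The substance is in $\text{(A)}\Rightarrow\text{(B)}$, which I would argue component by component. In each component choose a spanning tree $T$ rooted at a vertex $r$, let $\zeta(v)$ be the sign of the unique $T$-path from $r$ to $v$, and switch by $W=\{v:\zeta(v)=-1\}$. For a tree edge $uv$ with $u$ the parent of $v$, the $T$-path from $r$ to $v$ is that from $r$ to $u$ followed by $uv$, so $\zeta(v)=\zeta(u)\sigma(uv)$ and the switched sign $\zeta(u)\sigma(uv)\zeta(v)=\zeta(v)^{2}=+1$. For a non-tree edge $e=uv$, its fundamental cycle (consisting of $e$ together with the $T$-path between $u$ and $v$) is positive by balance; switching does not change a cycle's sign by Definition~\ref{switch}; and all its $T$-path edges are now positive; hence $e$ is now positive as well. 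Doing this in every component makes all edges positive. The only points needing care are the treatment of isolated vertices and loops (a loop is never between $W$ and its complement, and balance forces loops positive, so nothing changes there) and the fundamental-cycle step; the rest is bookkeeping. As an alternative to routing through (B), one can obtain $\text{(A)}\Rightarrow\text{(C)}$ directly by observing that one $x$--$y$ chain followed by the reverse of another is a closed walk whose edge multiset, reduced mod $2$, is an even subgraph that decomposes into edge-disjoint cycles, so its sign equals the product of those cycle signs, which is $+1$ under balance; but the spanning-tree argument is more self-contained and I would present that one.
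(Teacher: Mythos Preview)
The paper does not supply its own proof of this lemma; it introduces the statement as a restatement of Harary's balance theorems, cites \cite{h}, and moves on. Your argument is correct and is precisely the standard proof of Harary's theorem: the potential function $\zeta$ built from a spanning tree makes all tree edges positive after switching, and balance then forces every chord positive via its fundamental cycle (using that switching preserves the sign of closed chains, as noted in Definition~\ref{switch}). The implications $\text{(B)}\Rightarrow\text{(C)}$ via telescoping and $\text{(C)}\Rightarrow\text{(A)}$ via comparison with the trivial chain are handled cleanly, and your remarks about loops are accurate (a loop is itself a cycle, so balance forces it positive, and switching never alters a loop's sign). There is nothing in the paper to compare against beyond the citation.
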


\begin{definition}\label{harbi}
{\rm
Let $W$ be the set of vertices that are switched in Lemma \ref{balance}.  The pair $\{ W, V-W \}$ is called a \emph{Harary bipartition} of $G_\sigma$.  It is unique if and only if $G_\sigma$ is connected.
}
\end{definition}

\begin{definition}\label{baledge}
Let $G_\sigma$ be a connected, unbalanced signed graph.  An edge $e$ such that $G_\sigma-e$ is balanced, is called a \emph{balancing edge} of $G_\sigma$.
\end{definition}

\begin{proposition}\label{p:baledge}
Let $G_{\sigma}$ be a connected, unbalanced signed graph and $e$ an edge.  The following properties of $e$ are equivalent:
\begin{enumerate}[{\rm (1)}]
\item $e$ is a balancing edge.
  \label{p:baledge-baledge}
\item $e$ belongs to every negative cycle.
  \label{p:baledge-negcyc}
\item $e$ belongs to every negative cycle and does not belong to any positive cycle.%
  \label{p:baledge-poscyc}
\item $e$ is not an isthmus, $G_\sigma - e$ is balanced, and $\sigma(e)$ differs from the sign of a chain connecting its endpoints in $G_\sigma - e$.
  \label{p:baledge-chain}
\item $G_\sigma$ switches so that $E-e$ is all positive and $e$ is negative.
  \label{p:baledge-posneg}
\end{enumerate}
\end{proposition}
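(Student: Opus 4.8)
The natural approach is a cyclic chain of implications, together with one equivalence proved directly. I would first fix a switching function $\zeta$ realizing a Harary bipartition of the balanced graph $G_\sigma - e$ (using Lemma \ref{balance}); since $G_\sigma$ is connected and unbalanced, such a $\zeta$ exists and switches $E - e$ to all positive. The key preliminary observation is that after switching by $\zeta$, the edge $e = xy$ either becomes positive — in which case every cycle is positive and $G_\sigma$ is balanced, contradicting our hypothesis — or becomes negative. So as soon as we know $G_\sigma - e$ is balanced (and $e$ is not an isthmus, so $x,y$ are joined in $G_\sigma - e$), we automatically get property \ref{p:baledge-posneg}. This gives \ref{p:baledge-baledge} $\Rightarrow$ \ref{p:baledge-posneg} almost immediately, and also \ref{p:baledge-chain} $\Rightarrow$ \ref{p:baledge-posneg}: the hypothesis in \ref{p:baledge-chain} that $\sigma(e)$ differs from the sign of a connecting chain in $G_\sigma - e$ is exactly the statement that $e$ does not become positive under $\zeta$.

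Next I would prove \ref{p:baledge-posneg} $\Rightarrow$ \ref{p:baledge-poscyc} $\Rightarrow$ \ref{p:baledge-negcyc}. The first of these is the heart of the matter: if $G_\sigma$ switches so that $E - e$ is all positive and $e$ is negative, then any cycle not containing $e$ lies in the all-positive part and is positive; any cycle containing $e$ contains $e$ exactly once (cycles are elementary) together with a positive path, hence is negative. So $e$ lies on every negative cycle and on no positive cycle, which is \ref{p:baledge-poscyc}; and \ref{p:baledge-poscyc} $\Rightarrow$ \ref{p:baledge-negcyc} is trivial. To close the cycle I would show \ref{p:baledge-negcyc} $\Rightarrow$ \ref{p:baledge-baledge}: if $e$ lies on every negative cycle, then $G_\sigma - e$ contains no negative cycle, so it is balanced, i.e., $e$ is a balancing edge.

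It remains to fold in \ref{p:baledge-chain}. I would get \ref{p:baledge-posneg} $\Rightarrow$ \ref{p:baledge-chain} directly: after switching so $E - e$ is positive and $e$ negative, $e$ is not an isthmus (else its endpoints would be disconnected in $G_\sigma - e$, but $G_\sigma$ connected forces $G_\sigma - e$ to have at most the components separated by $e$; more carefully, a balancing edge in a connected unbalanced graph cannot be an isthmus because $G_\sigma - e$ would then be disconnected and each component, being balanced already as a subgraph containing no negative cycle through $e$, would make $G_\sigma$ itself balanced — I would phrase this via switching), $G_\sigma - e$ is balanced, and the sign of any $x$–$y$ chain in $G_\sigma - e$ is $+$ (all such chains lie in the positive part and have the same sign by Lemma \ref{balance}), which differs from $\sigma(e) = -$. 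Since we already have \ref{p:baledge-chain} $\Rightarrow$ \ref{p:baledge-posneg}, all five statements are equivalent.

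**Anticipated obstacle.** The one point needing genuine care is ruling out that $e$ is an isthmus in the passage to \ref{p:baledge-chain}, and dually the observation that a balancing edge of a connected unbalanced graph is never an isthmus — this is where connectedness and unbalancedness are both used, and it is easy to state loosely. I would handle it by the switching picture: if $e = xy$ were an isthmus, then $G_\sigma - e$ has the components $G_x \ni x$ and $G_y \ni y$; each is balanced, so switch each to all-positive independently; then adjusting $W$ to include or exclude one side makes $e$ positive as well, so $G_\sigma$ is balanced — contradiction. Everything else is routine manipulation of cycle signs under switching, justified by Definition \ref{switch} and Lemma \ref{balance}.
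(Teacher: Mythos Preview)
Your proposal is correct and follows essentially the same route as the paper: both arguments run the cycle $(1)\Rightarrow(5)\Rightarrow(3)\Rightarrow(2)$ via switching $E-e$ to all-positive (Lemma~\ref{balance}) and reading off cycle signs, and both treat $(4)\Leftrightarrow(5)$ separately. You are more explicit than the paper on two points it leaves terse---the computation showing that ``$\sigma(e)$ differs from the sign of a connecting chain'' is exactly ``$e$ switches to negative,'' and the reason a balancing edge of a connected unbalanced graph cannot be an isthmus---but the underlying logic is the same.
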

\begin{proof}
\eqref{p:baledge-chain} $\Leftrightarrow$ \eqref{p:baledge-posneg} by Lemma \ref{switch}.

\eqref{p:baledge-baledge} $\Leftrightarrow$ \eqref{p:baledge-negcyc} follows from the definition.

\eqref{p:baledge-baledge} $\Rightarrow$ \eqref{p:baledge-posneg}.  If $e$ is a balancing edge, then $G_\sigma$ switches so that $E-e$ is all positive (Lemma \ref{switch}).  If $e$ were positive, $G_\sigma$ would be balanced, which it is not; thus $e$ is negative.

\eqref{p:baledge-posneg} $\Rightarrow$ \eqref{p:baledge-poscyc} is evident.
\eqref{p:baledge-poscyc} $\Rightarrow$ \eqref{p:baledge-negcyc} is trivial.  
\end{proof}

\begin{definition}
{\rm 
In a signed graph $G_\sigma$, a block is an \emph{inner block} if it is unbalanced or it contains an edge of a path connecting two balanced blocks.  In the opposite case it is an \emph{outer block}.

An \emph{unbalanced necklace of balanced blocks} is a signed graph constructed from balanced signed blocks $B_{1\sigma_1}, B_{2\sigma_2}, \ldots, B_{k\sigma_k}$ ($k\geq2$) and distinct vertices $v_i, w_i \in B_{i\sigma_i}$ by identifying $v_i$ with $w_{i-1}$ for $i=2,\ldots,k$ and $v_1$ with $w_k$.  The blocks $B_{i\sigma_i}$ are called the \emph{constituents} of the necklace.  Note that an unbalanced necklace of balanced blocks is an unbalanced block.

A \emph{core} of $G_\sigma$ is the union of inner blocks of an unbalanced connected component of $G_\sigma$.
}
\end{definition}

\begin{definition}\label{4}
{\rm
Let $G_{\sigma}$ be a signed graph and let $P$ be a chain (not necessarily elementary) connecting $x$ and $y$ in $G_{\sigma}$:
$$P{:}\ x,e_{1},x_{1},e_{2},x_{2},\ldots,y.$$
where $x,x_{1},\ldots,y \in V$ and $e_{1},e_{2},\ldots \in E$.
We put 
$$\sigma(P)=\displaystyle{\prod_{e_{i}\in P}\sigma(e_{i})}\in \{-1,+1\}$$ 
and we call $P$ \emph{positive} (resp., \emph{negative}) if $\sigma(P)=+1$ (resp., $\sigma(P)=-1$).  
(This sign rule generalizes the sign of a cycle.)

We may write $P^{\varepsilon}$ instead of $P$ when $\varepsilon=\sigma(P)$ and we may write $P^{\varepsilon}(x, y)$ when $P$ is a chain connecting $x$ and $y$.
$P^{\varepsilon}$ is called an \emph{$\varepsilon$-chain of sign $\varepsilon$}.

The graph of a chain $P$ is $G(P)$, the subgraph of $G_\sigma$ that consists of all the vertices and edges of $P$.

An $\varepsilon$-chain $P^{\varepsilon}(x,y)$ is \emph{elementary} if it is minimal given its sign and its end vertices.  
Minimality means that no other $\varepsilon$-chain that connects $x$ and $y$ has a graph that is a proper subgraph of $G(P^{\varepsilon}(x,y))$.
}
\end{definition}

\begin{figure}[htbp]
  \begin{center}
  \includegraphics[scale=0.8]{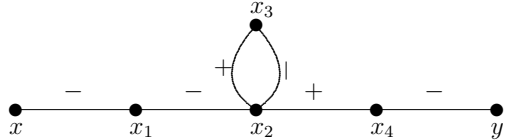}
  \end{center}
    \caption{$P^{+}(x, y){:}\ x,x_{1},x_{2},x_{3},x_{2},x_{4},y$ is a positive chain and $P^{-}(x, y){:}\ x,x_{1},x_{2},x_{4},y$ is a negative chain.  $P^{+}$ contains $P^{-}$ as a subchain, but both $P^{+}$ and $P^{-}$ are elementary $\varepsilon$-chains for different values of $\varepsilon$.}
    \label{FIG2}
\end{figure}

Now we give the different types of elementary $\varepsilon$-chain which admit a negative cycle.

\begin{definition}\label{6}
{\rm
A \emph{hypercyclic chain} $P^{\varepsilon}$ connecting two vertices (not necessarily
distinct) in a signed graph $G_{\sigma}$ is an elementary $\varepsilon$-chain which contains a negative cycle.
Note that the value of $\varepsilon$ does not affect the definition of a hypercyclic chain, but it must be specified.
}
\end{definition}

\begin{proposition}\label{hypercyclic}
Figure \ref{FIG3} shows the three possible cases of a hypercyclic chain.  
\end{proposition}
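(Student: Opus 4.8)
The plan is to determine the graph $G(P)$ of an arbitrary hypercyclic chain $P=P^{\varepsilon}(x,y)$. Since $P$ is elementary (Definition~\ref{4}), $G(P)$ is forced by minimality, so it suffices to show that in every configuration other than those of Figure~\ref{FIG3} there is a strictly smaller $\varepsilon$-chain between $x$ and $y$; what remains is exactly Figure~\ref{FIG3}. Note that $G(P)$ is connected, being the graph of a walk.

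First I would fix a spanning tree $T$ of $G(P)$ and let $P_{T}$ be the unique chain between $x$ and $y$ lying in $T$. If $\sigma(P_{T})=\varepsilon$ then $G(P_{T})=P_{T}\subseteq G(P)$, so minimality forces $G(P)=P_{T}$, a tree --- impossible, since $G(P)$ contains a negative cycle. Hence $\sigma(P_{T})=-\varepsilon$. Moreover, not every fundamental cycle of $T$ can be positive: otherwise every element of the cycle space of $G(P)$ would be a positive closed chain, so all chains between $x$ and $y$ in $G(P)$ would share the sign $\sigma(P_{T})=-\varepsilon$, contradicting $\sigma(P)=\varepsilon$. Choose a negative fundamental cycle $C$ and let $R$ be a shortest path in $T$ from $V(P_{T})$ to $V(C)$; it meets $P_{T}$ in a single vertex $w$ and $C$ in a single vertex $v$, and is trivial ($w=v$) precisely when $P_{T}$ already meets $C$. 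The chain that runs along $P_{T}$ from $x$ to $w$, along $R$ to $v$, once around $C$, back along $R$ to $w$, and along $P_{T}$ to $y$ has sign $\sigma(P_{T})\sigma(C)=\varepsilon$, since the two traversals of $R$ cancel; its graph is $P_{T}\cup R\cup C\subseteq G(P)$, so by minimality $G(P)=P_{T}\cup R\cup C$.

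It remains to make this rigid and read off the cases. The graph $P_{T}\cup R\cup C$ has exactly one cycle, namely $C$, so $C$ is the unique cycle of $G(P)$ (in particular negative). Also $P_{T}$ shares no edge with $C$: if it did, let $[a,b]$ be the maximal common subpath; replacing $[a,b]$ within $P_{T}$ by the complementary arc of $C$ yields a chain between $x$ and $y$ of sign $-\sigma(P_{T})=\varepsilon$ whose graph omits the edges of $[a,b]$, again contradicting minimality. Therefore $P_{T}$ and $C$ meet in at most one vertex, $R$ is internally disjoint from both, and $G(P)$ is exactly a negative cycle $C$ through a vertex $v$, joined by a (possibly trivial) path $R$ to a vertex $w$ on a path $P_{T}$ from $x$ to $y$. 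The three pictures of Figure~\ref{FIG3} are then obtained by letting $P_{T}$ and $R$ degenerate: $x=y$, so $P_{T}$ is trivial and $G(P)$ is $C$ with a (possibly trivial) pendant path to $x$; $x\ne y$ with $R$ trivial, so $C$ is attached directly at a vertex of the path from $x$ to $y$; and $x\ne y$ with $R$ nontrivial, so $C$ hangs by a stem from an interior branch vertex of that path (the cases $v\in\{x,y\}$ being the obvious further degeneracies).

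The one genuinely delicate step, I expect, is the arc-swap argument forbidding $P_{T}$ from sharing an edge with $C$ --- equivalently, confining $C$ to meet the path from $x$ to $y$ in at most one vertex. Everything else is routine: the sign bookkeeping along the detour chain, the cycle-space observation that some fundamental cycle must be negative, and the final enumeration of degenerate cases. The recurring theme is simply that an elementary chain wastes no edge, so wherever $G(P)$ would offer a shortcut or an alternate route of the required sign, minimality rules it out.
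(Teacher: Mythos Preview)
Your argument is correct and reaches the same structural conclusion as the paper, but by a genuinely different route. The paper works by exclusion: it first shows directly that $G(P^\varepsilon)$ can contain no positive cycle (replacing an edge $e$ of such a cycle by its complementary arc preserves the sign), then that it cannot contain two negative cycles (using the frame-circuit shape of their union to build a shorter $\varepsilon$-chain), and only after isolating a unique negative cycle $C$ does it argue that the paths from $x$ and $y$ to $C$ must land at the same vertex. You instead fix a spanning tree, use a cycle-space parity argument to locate a negative fundamental cycle $C$, and then exhibit in one stroke an explicit $\varepsilon$-chain whose graph is $P_T\cup R\cup C$; minimality immediately forces $G(P)$ to equal this, and your arc-swap handles the residual constraint that $P_T$ meet $C$ in at most one vertex. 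Your approach is more constructive and avoids the separate case analysis ruling out two negative cycles; the paper's approach is more hands-on with the cycle structure and makes the ``no positive cycle'' and ``no second negative cycle'' facts explicit, which are of some independent interest. Your final trichotomy (triviality of $P_T$; triviality of $R$) does not line up label-for-label with the paper's types (a), (b), (c), which are distinguished instead by whether the paths $A$ and $B$ from $x$ and $y$ to $C$ are internally disjoint, but the underlying graphs are the same and the translation is immediate: $R$ trivial corresponds to $A,B$ internally disjoint (types (a) and (b)), $R$ nontrivial to $A,B$ sharing the stem $R$ (type (c)).
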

\begin{proof}
\textit{Correctness of the three types}:  
In each of the three types the whole figure is the graph of an elementary $\varepsilon$-chain connecting $x$ and $y$ that contains the negative cycle.  There is also an elementary $(-\varepsilon)$-chain that does not contain the negative cycle.  Thus, the whole figure is a hypercyclic $\varepsilon$-chain.  

\textit{Completeness of the three types}:  
Let $P^\varepsilon(x,y)$ be a hypercyclic chain in $G_\sigma$.  $S$ cannot be a tree because it must contain a negative cycle.  

If $G(P^\varepsilon)$ contains a positive cycle $C$ with an edge $e$, then $e$ can be replaced by $C-e$ anywhere it appears in $P^\varepsilon$ to get a new $\varepsilon$-chain $Q^\varepsilon$ connecting $x$ and $y$ whose graph is a proper subgraph of $G(P^\varepsilon)$.  $Q^\varepsilon$ can be simplified to an elementary $\varepsilon$-chain whose graph is contained in $G(Q^\varepsilon)$.  That contradicts minimality of $P^\varepsilon$.  Therefore, $G(P^\varepsilon)$ cannot contain a positive cycle.  It cannot contain two negative cycles that have more than one common vertex because their union contains a positive cycle.

Suppose $G(P^\varepsilon)$ contains two negative cycles, $C$ and $\acute C$.  If $C$ and $\acute C$ have no common vertex, then $G(P^\varepsilon)$ also contains an elementary chain $Q$ that connects them and is internally disjoint from both cycles.  If $C$ and $\acute C$ have a single common vertex, let $Q$ be the chain of length 0 that consists of that common vertex.
Let $F=C \cup \acute C \cup Q$ and let $e$ be an edge of $C$ with end vertices $u_1, u_2$.  ($F$ is a frame circuit of type (ii) or (iii); see Definition \ref{3}.)  
Then $e$ can be replaced in $P^\varepsilon$ by a chain in $F-e$ that connects $u_1$ and $u_2$; as with a positive cycle, this contradicts the minimality of $P^\varepsilon$.  

We conclude that $P^\varepsilon$ contains a unique cycle $C$, which is negative.  There are an elementary chain $A$ in $G(P^\varepsilon)$ that connects $x$ to a vertex $t \in C$ and an elementary chain $B$ that connects $y$ to a vertex $u\in C$.  

If $t\neq u$, then there are two elementary chains in $C$ from $t$ to $u$; we call them $C_1$ and $C_2$.  The elementary chains $P_1=AC_1B^{-1}$ and $P_2=AC_2B^{-1}$ have opposite signs.  A hypercyclic chain $P^\varepsilon(x,y)$ must have the form $AC_1C_2^{-1}C_1B^{-1}$ in order to contain the negative cycle, but then it is not minimal because either $P_1$ or $P_2$ is smaller and has the same sign $\varepsilon$.  Therefore $t=u$.

If $A$ and $B$ are internally disjoint, by minimality we have type (a) or (b) in Figure \ref{FIG3}.  If they are not internally disjoint, by minimality we have type (c).  That completes the proof.
\end{proof}

\begin{figure}[htbp]
   \includegraphics[scale=0.8]{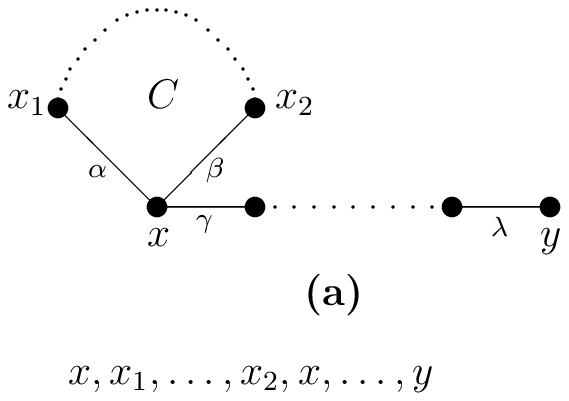}
   \hspace*{1cm}
   \includegraphics[scale=0.8]{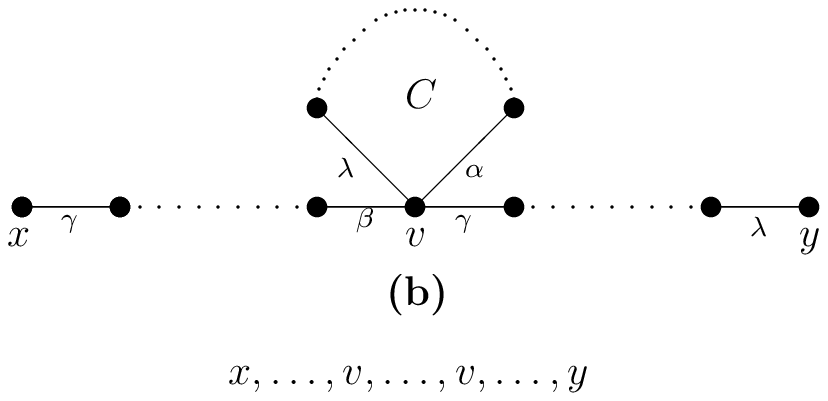}\\
   \begin{center}
  \includegraphics[scale=0.8]{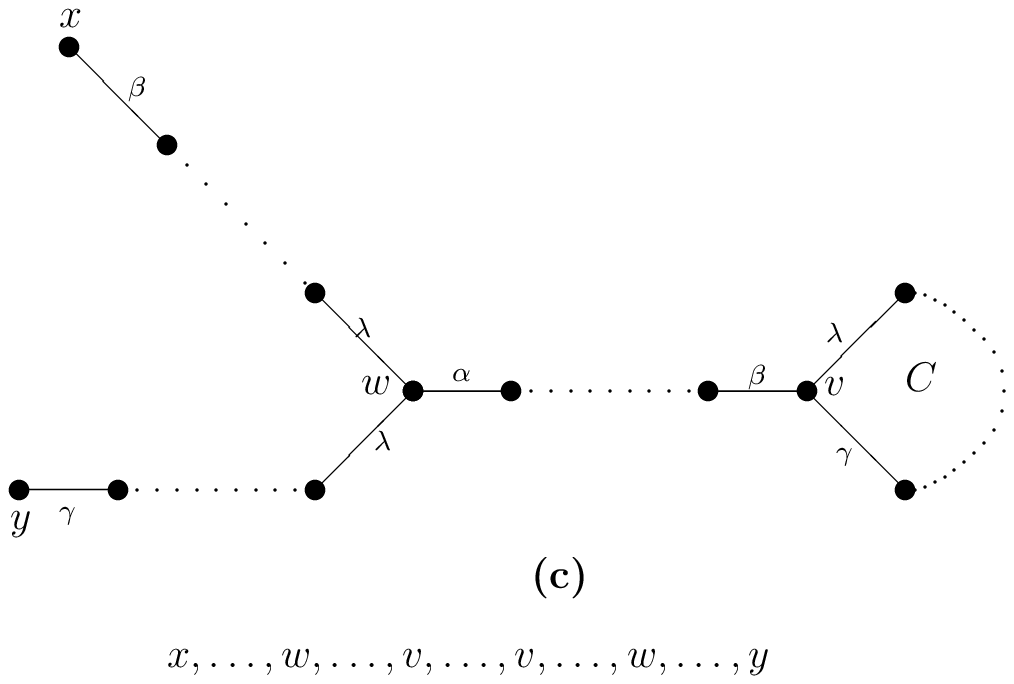}
  \caption{The three types of hypercyclic chain. We note that $\alpha, \beta, \gamma, \lambda  \in\{-1, +1\}$.  In (a) and (c), $y$ may coincide with $x$.  In (c), $x$ or $y$ (or both) may coincide with $w$.  Note that if in (c) we let $w=v$, we get (a) or (b).}
  \label{FIG3}
  \end{center}
\end{figure}

	\section{Sign connection}\label{signconn}

\begin{definition}\label{27}
{\rm
Let $G_{\sigma}= (V, E ;\sigma)$ be a signed graph. Let us consider the relation $R$ in $V$ defined by:
\begin{align*}
&x R y \Leftrightarrow \\
      &\begin{cases}
        x = y , \text{ or}\\
      \text{there exist both a negative chain and a positive chain joining $x$ and $y$}.
       \end{cases}
\end{align*}
Then $R$ is an equivalence relation whose equivalence classes form a partition $\{V_{1},\ldots, V_{q}\}$ of $V$. If $xRy$, we say $x$ and $y$ are \emph{sign connected}.  The subgraphs of $G_{\sigma}$ generated by the subsets $V_{i}$ ($i= 1, \ldots, q$) are  called the \emph{sign-connected components} of $G_{\sigma}$.

A signed graph is called \emph{sign connected} if it has just one sign-connected component; that is, for any pair of vertices $x$ and $y$, there exist both a positive chain and a negative chain connecting them.
}
\end{definition}

	\subsection{Properties of sign connection}\label{s-conn}

\begin{lemma}\label{29}
A hypercyclic chain is a sign-connected signed graph.
\end{lemma}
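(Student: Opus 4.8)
The claim is that a hypercyclic chain $P^\varepsilon$, viewed as a signed graph (i.e., its graph $G(P^\varepsilon)$ with the inherited signature), is sign connected. By Proposition~\ref{hypercyclic} it suffices to verify this for each of the three types (a), (b), (c) in Figure~\ref{FIG3}, since those are the only possibilities. The plan is to exhibit, for an arbitrary pair of vertices $u,w$ in the figure, both a positive and a negative chain joining them, using the negative cycle $C$ as a ``sign-switching gadget.''

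First I would record the key structural feature common to all three types: the graph consists of a single negative cycle $C$ together with one or two elementary paths (trees) attached to $C$, and every vertex of $G(P^\varepsilon)$ lies on $C$ or on one of these attached paths, with a well-defined point where its path meets $C$ (in types (a),(b) the two attachment paths meet $C$ at a single common vertex $t=u$; in type (c) there is a branch vertex $w$ on $C$ from which the attachments hang). The crucial observation is that any vertex $z$ can reach some vertex $t(z)\in C$ by a unique path $Q_z$ inside the tree part, and between any two vertices $t_1,t_2$ of $C$ there are exactly two elementary paths around $C$, whose signs are opposite because $C$ is negative (their product is $\sigma(C)=-1$).

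Now given $u,w$, form the concatenation $Q_u \cdot R \cdot Q_w^{-1}$ where $R$ is a path in $C$ from $t(u)$ to $t(w)$; replacing $R$ by the complementary arc of $C$ changes the sign by a factor $-1$. Hence one of the two resulting chains is positive and the other negative, giving both signs as required. There is one degenerate case to handle separately: if $t(u)=t(w)$ (in particular if $u=w$, or if $u$ and $w$ lie on the same attached path), then the ``two arcs of $C$ from $t(u)$ to $t(w)$'' degenerate; instead I would take the chain $Q_u\cdot Q_w^{-1}$ (sign $\varepsilon_0$, say) and the chain $Q_u\cdot C\cdot Q_w^{-1}$ that runs once around the whole negative cycle, of sign $-\varepsilon_0$. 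Either way both signs are realized. This covers all vertex pairs in all three types, so $G(P^\varepsilon)$ is sign connected.

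The only mildly delicate point — the ``main obstacle,'' such as it is — is bookkeeping the attachment structure uniformly across the three types and correctly handling the degeneracies where a path has length $0$ or where $u$, $w$, or $t(z)$ coincide with the branch vertex $w$ of type (c); none of this is hard, but it is where a careless argument could miss a case. Everything else reduces to the single fact that traversing a negative cycle toggles the sign of a chain, which is immediate from Definition~\ref{4} and the sign rule.
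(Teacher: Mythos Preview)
Your proposal is correct and rests on the same mechanism as the paper's proof: the negative cycle $C$ acts as a sign-toggling gadget, so that routing a chain once around $C$ flips its sign. The paper's own argument is actually briefer and, read literally, only exhibits chains of both signs between the two \emph{endpoints} $x$ and $y$ of the hypercyclic chain $P^{\alpha}$ (namely $P^{\alpha}$ itself and the acyclic subchain $P^{\beta}$ obtained by omitting the detour around $C$); your version is more explicit in that it treats an \emph{arbitrary} pair $u,w$ of vertices of $G(P^{\varepsilon})$, which is what the lemma statement actually asks for, and your degenerate case $t(u)=t(w)$ is precisely what fills the gap. One small inaccuracy that does not affect correctness: in type~(c) the branch vertex $w$ is not on $C$; rather there is a path from a vertex $v\in C$ to $w$ and then two paths from $w$ to $x$ and to $y$. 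Your argument only uses that the tree part is attached to $C$ at a single vertex, and that holds in all three types, so the slip is harmless.
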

\begin{proof}
Let $P^{\alpha}(x, y)$ be a hypercyclic chain of  sign $\alpha$  connecting $x$ and $y$ (see figure \ref{FIG3}):
$$P^{\alpha}(x, y){:}\ x, x_{1}, x_{2}, \ldots, x_{i}, z_1, \ldots, z_j, x_{i}, x_{i+1}, \ldots, y.$$
The sequence $C$ from $P^{\alpha}(x, y)$, given by
$C{:}\ x_{i}, z_1, \ldots, z_j, x_{i}$, is a negative cycle. Let $P^{\beta}(x, y)$ be the following acyclic $\beta$-chain contained in $P^{\alpha}(x, y)$:
$$P^{\beta}(x, y){:}\ x, x_{1}, x_{2}, \ldots, x_{i}, x_{i+1}, \ldots, y.$$
Since $\beta$ is given by
$\alpha = \sigma(P^{\alpha}) = \sigma(P^{\beta}) \sigma(C) = - \beta,$ the two $\varepsilon$-chains $P^{\alpha}(x, y)$   and  $P^{\beta}(x, y)$ are of different signs.
\end{proof}

\begin{theorem}\label{31}
The sign components of a signed graph are the unbalanced connected components and the individual vertices of the balanced connected components.

A signed graph $G_{\sigma}$ is sign connected if and only if either $G_{\sigma}$ is connected and unbalanced, or $|V|=1$.
\end{theorem}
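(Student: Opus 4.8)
The plan is to establish the first statement (the description of the sign components) and then read off the second as a counting consequence. The key preliminary observation is that sign connection between two vertices presupposes the existence of a chain joining them, so no vertex of one connected component can be sign connected to a vertex of another; hence it suffices to determine, inside each connected component $H$ of $G_\sigma$, which pairs of vertices are sign connected.

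First I would dispose of the balanced case. If $H$ is balanced and $x \ne y$ are vertices of $H$, then Lemma~\ref{balance} says every chain between $x$ and $y$ carries the same sign, so $x$ and $y$ are not joined by chains of both signs and are therefore not sign connected. Since $xRx$ holds by definition, the $R$-classes inside a balanced $H$ are precisely the singletons $\{x\}$, $x \in V(H)$, which is the stated conclusion for balanced components.

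Next comes the substantive case, $H$ unbalanced, where I would show directly that any two vertices $x, y \in V(H)$ are sign connected, the case $x = y$ being trivial. Using connectedness of $H$, fix a chain $P$ from $x$ to $y$ and put $\varepsilon = \sigma(P)$. Using unbalance, fix a negative cycle $C$ in $H$; using connectedness again, fix a chain $Q$ from $x$ to a vertex $t$ of $C$, and let $C_t$ denote $C$ read off as a closed chain based at $t$, so $\sigma(C_t) = -1$. The concatenated chain $QC_tQ^{-1}P$ runs from $x$ to $t$ to $t$ to $x$ to $y$, hence is a chain from $x$ to $y$, and its sign is $\sigma(Q)^2\,\sigma(C_t)\,\sigma(P) = -\varepsilon$; thus $x$ and $y$ are joined by chains of both signs. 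It follows that $V(H)$ is a single $R$-class, and by the opening observation $H$ is exactly one sign component. This proves the first statement: the sign components are the unbalanced connected components together with the individual vertices of the balanced connected components.

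Finally, for the ``if and only if'': $G_\sigma$ is sign connected exactly when $R$ has a single class, and by the first statement the number of classes equals the number of unbalanced components plus the total number of vertices lying in balanced components. This sum is $1$ in only two ways: either there is one component and it is unbalanced, i.e.\ $G_\sigma$ is connected and unbalanced; or there are no unbalanced components and exactly one vertex sits in a balanced component, i.e.\ $|V| = 1$ with that vertex balanced. Since a one-vertex component is connected in any case — whether or not it bears a negative loop — the two disjuncts ``connected and unbalanced'' and ``$|V|=1$'' together cover precisely these possibilities. I do not anticipate a genuine obstacle: the argument turns entirely on writing down the detour $QC_tQ^{-1}P$, which is legitimate only because chains are walks and so an edge (indeed a whole excursion out to $C$ and back) may be repeated to flip the sign, and on a little care with the degenerate configurations ($x = y$, a one-vertex component, a negative loop), all of which the statement of the theorem already accommodates.
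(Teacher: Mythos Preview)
Your proof is correct, but it takes a somewhat different route from the paper's.

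For the balanced case, the paper switches to an all-positive signature and then derives a contradiction from the concatenation $P^+(x,y)P^-(y,x)$ being a negative closed chain; you instead invoke the second half of Lemma~\ref{balance} (all chains between $x$ and $y$ have the same sign) directly, which is cleaner.

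For the unbalanced case, the paper builds paths $P_x,P_y$ from $x,y$ to the negative cycle $C$, then splits into two cases according to whether $P_x$ and $P_y$ meet: if they do, it appeals to Lemma~\ref{29} on hypercyclic chains; if not, the two arcs of $C$ between the landing points furnish paths of opposite sign. Your single detour $QC_tQ^{-1}P$ bypasses both the case analysis and the dependence on Lemma~\ref{29}; it is more elementary, at the cost that the oppositely-signed chain you produce is a walk with repeated edges rather than an elementary chain. Finally, you establish the component description first and deduce the ``if and only if'' by counting, whereas the paper argues the ``if and only if'' directly and leaves the component statement implicit; your ordering makes the logical structure more transparent.
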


\begin{proof}
Assume $G_\sigma$ is sign connected.  Obviously, it must be connected.

\emph{Necessary condition}:  Assume that $G_{\sigma}$ is balanced.  
Assume that $G_{\sigma}$ has two distinct vertices, $x, y$, that are sign connected.  Thus, there are chains $P^+(x,y)$ and $P^-(y,x)$, whose concatenation is a negative closed chain.  After switching so $G_{\sigma}$ is entirely positive (by Lemma \ref{balance}), this closed chain is still negative, which is impossible.  Therefore, no such distinct $x,y$ can exist, so $|V|=1$.

\emph{Sufficient condition}:  Assume that $G_{\sigma}$ is unbalanced and let $C$ be a negative cycle.  Let $x,y$ be two vertices and let $P_x, P_y$ be paths (possibly of length 0) from $x, y$ (resp.)\ to $C$, chosen so that, if they have a common vertex, they are identical from their first common vertex on to $C$.  
If $P_x$ and $P_y$ have a common vertex, then $P_x \cup P_y \cup C$ is a hypercyclic chain, thus by Lemma \ref{29} $x$ and $y$ are sign connected.  
Otherwise, there are two paths in $C$ that connect the vertices $\acute{x}$ and $\acute{y}$ at which $P_x$ and $P_y$ (resp.)\ meet $C$.  These paths have opposite signs, so $x$ and $y$ are joined by two paths of opposite sign.
\end{proof}

	\subsection{Quasibalance}\label{qbal}

\begin{definition}\label{qbal-def} 
{\rm
A signed graph $G_{\sigma}$ is called \emph{quasibalanced} if for any two negative cycles $C$ and $\acute{C}$ of $G_{\sigma}$ we have
$$|V(C) \cap  V(\acute{C})|\geq 2.$$
}
\end{definition}

Quasibalance seems to be a new idea.  It is intimately related to sign connection and to matroids (see Section \ref{matroids}).  
We do not yet know how to completely characterize quasibalanced signed graphs, but Proposition \ref{qbal-disconn} follows easily from the definition. 

\begin{proposition}\label{qbal-disconn}
A signed graph is quasibalanced if and only if it has at most one unbalanced block and that block is quasibalanced.
\end{proposition}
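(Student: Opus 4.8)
The plan is to reduce everything to the elementary graph fact that every cycle lies within a single block and that two distinct blocks of a graph share at most one vertex. First I would record the consequence: a block is unbalanced exactly when it contains a negative cycle, and since a negative cycle of $G_\sigma$, being a subgraph with no articulation vertex, is contained in a unique block, the negative cycles of $G_\sigma$ are precisely the negative cycles of its unbalanced blocks.

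For the forward direction, assume $G_\sigma$ is quasibalanced. If $G_\sigma$ had two distinct unbalanced blocks $B_1$ and $B_2$, I would pick negative cycles $C_1 \subseteq B_1$ and $C_2 \subseteq B_2$; then $V(C_1) \cap V(C_2) \subseteq V(B_1) \cap V(B_2)$ has at most one element, contradicting Definition \ref{qbal-def}. Hence there is at most one unbalanced block. If there is none, $G_\sigma$ is balanced and there is nothing more to prove; if there is exactly one, say $B$, then every pair of negative cycles of $G_\sigma$ lies in $B$, so the quasibalance condition for $G_\sigma$ is literally the quasibalance condition for $B$, and $B$ is quasibalanced.

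For the converse, if $G_\sigma$ has no unbalanced block it is balanced, hence has no negative cycle, and is quasibalanced vacuously. If it has exactly one unbalanced block $B$ and $B$ is quasibalanced, then any two negative cycles $C$ and $\acute C$ of $G_\sigma$ both lie in $B$, so $|V(C) \cap V(\acute C)| \geq 2$ by the quasibalance of $B$; therefore $G_\sigma$ is quasibalanced.

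The only point needing any care is the claim that a negative cycle is confined to a single block, which is immediate since a cycle has no articulation vertex and so is contained in a maximal such subgraph; consequently that block is unbalanced. I expect no genuine obstacle here, only the bookkeeping of the "no unbalanced block" case and the "one unbalanced block" case.
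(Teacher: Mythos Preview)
Your argument is correct. The paper does not actually give a proof of this proposition; it simply remarks that the result ``follows easily from the definition.'' Your write-up is precisely the natural elaboration of that remark, resting on the two standard facts you isolate (every cycle lies in a single block, and distinct blocks share at most one vertex), so there is nothing to compare.
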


	\subsection{Sign isthmus and sign articulation vertex}\label{s-isthmus}

\begin{definition}\label{32}
{\rm
Let $G_{\sigma} $ be a signed graph that is sign connected. An edge $e \in E$ is called a \emph{sign isthmus} if the graph $G_{\sigma} - e$ is not sign connected.
}
\end{definition}

\begin{proposition}\label{36}
Let $G_{\sigma}$ be a sign-connected signed graph with $|V|>1$.  An edge $e$ is a sign isthmus if and only if it is an isthmus or a balancing edge.
\end{proposition}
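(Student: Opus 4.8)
The plan is to reduce the whole statement to Theorem \ref{31}, which already classifies the sign-connected signed graphs. First I would record what the hypothesis gives: by Theorem \ref{31}, a sign-connected signed graph with more than one vertex is exactly a connected, unbalanced signed graph, so $G_\sigma$ is connected and unbalanced. By Definition \ref{32}, the edge $e$ is a sign isthmus precisely when $G_\sigma-e$ is not sign connected. Since $G_\sigma-e$ has the same vertex set as $G_\sigma$, it still has more than one vertex, so Theorem \ref{31} applies to it in the form ``sign connected $\iff$ connected and unbalanced.'' Hence $e$ is a sign isthmus if and only if $G_\sigma-e$ is disconnected or $G_\sigma-e$ is balanced.

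Next I would translate each alternative into the language of the statement. Because $G_\sigma$ is connected, $G_\sigma-e$ has strictly more components than $G_\sigma$ exactly when it is disconnected; that is, $G_\sigma-e$ is disconnected if and only if $e$ is an isthmus of the underlying graph. Because $G_\sigma$ is connected and unbalanced, Definition \ref{baledge} says $G_\sigma-e$ is balanced if and only if $e$ is a balancing edge. Combining the two equivalences gives: $e$ is a sign isthmus if and only if $e$ is an isthmus or $e$ is a balancing edge. I would also note explicitly that these two cases need not be exclusive (for instance a pendant negative edge is both an isthmus and a balancing edge), so that the inclusive ``or'' in the statement is the right reading.

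I do not expect a serious obstacle; the argument is essentially bookkeeping on top of Theorem \ref{31}. The two points that need a little care are (i) verifying that $G_\sigma-e$ still satisfies $|V|>1$, so that the correct clause of Theorem \ref{31} (not the $|V|=1$ clause) is the one being applied, and (ii) checking that Definition \ref{baledge} places no connectivity requirement on $G_\sigma-e$, so that an isthmus whose deletion leaves a balanced but possibly disconnected graph still qualifies as a balancing edge. If one preferred to avoid invoking Theorem \ref{31} on the deleted graph, the same conclusion follows directly: if $e$ is neither an isthmus nor a balancing edge then $G_\sigma-e$ is connected and unbalanced, hence sign connected by Theorem \ref{31} (via Lemma \ref{29}), so $e$ is not a sign isthmus; conversely, if $e$ is an isthmus or a balancing edge then $G_\sigma-e$ is disconnected or balanced, and in either case is not sign connected, so $e$ is a sign isthmus.
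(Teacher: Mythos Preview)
Your argument is correct and is essentially the paper's own proof: both reduce the question to Theorem~\ref{31}, obtaining that $e$ is a sign isthmus iff $G_\sigma-e$ is disconnected or balanced, and then identify these alternatives with ``isthmus'' and ``balancing edge'' respectively.

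One small correction to your parenthetical remark: the two alternatives are in fact mutually exclusive. An isthmus lies in no cycle, so deleting it leaves the set of cycles (and hence the balance status) unchanged; since $G_\sigma$ is unbalanced, $G_\sigma-e$ remains unbalanced whenever $e$ is an isthmus, and thus an isthmus is never a balancing edge (cf.\ Proposition~\ref{p:baledge}\eqref{p:baledge-chain}). Your pendant-edge example does not work for the same reason. This does not affect the validity of your proof, only the side comment.
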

\begin{proof}
Let $e$ be a sign isthmus.  We distinguish the two following cases:

\emph{Case 1.} If $G_{\sigma} - e$ is not connected, then $e$ is an isthmus.  For the converse, an isthmus in a sign-connected signed graph is clearly a sign isthmus.

\emph{Case 2.} If $G_{\sigma} - e$ is connected, assume it is unbalanced.  Then it is sign connected (Theorem \ref{31}).  Thus, $e$ is not a sign isthmus.  Therefore $G_{\sigma} - e$ is balanced.  Since $G_{\sigma}$ is unbalanced (Theorem \ref{31}), $e$ is a balancing edge.  For the converse, if $e$ is a balancing edge then $G_{\sigma} - e$ is balanced so it is not sign connected.
\end{proof}

\begin{definition}\label{37}
{\rm
Let $G_{\sigma}$ be a sign-connected signed graph, and let $x \in V$. The vertex $x$ is called a \emph{sign articulation vertex} if the graph $G_{\sigma} - x$ is not sign connected.
}
\end{definition}

\begin{proposition}\label{38}
Let $G_{\sigma}$ be a sign-connected signed graph. If $x$ is a sign articulation vertex, then one and only one of the following properties holds:
\begin{enumerate}[{\rm (1)}]
\item $x$ is an  articulation vertex.
\item $x$ belongs to a negative cycle.
\end{enumerate}
\end{proposition}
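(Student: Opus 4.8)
The plan is to obtain both the dichotomy and its exclusivity by applying Theorem~\ref{31} to the signed graph $G_\sigma-x$. Since $G_\sigma$ is sign connected it is connected, and by Theorem~\ref{31} it is unbalanced (the case $|V|=1$ being excluded, since then $G_\sigma-x$ would be sign connected). By hypothesis $G_\sigma-x$ is not sign connected, so Theorem~\ref{31}, read for $G_\sigma-x$, leaves exactly one of two structural possibilities: either $G_\sigma-x$ is disconnected, or $G_\sigma-x$ is connected and balanced (on at least two vertices). I will show that the first possibility yields property~(1) and the second yields property~(2); since the two possibilities are mutually exclusive and exhaustive, this shows that one and only one of (1), (2) holds.

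In the first case, $G_\sigma-x$ is disconnected while $G_\sigma$ is connected, so each component of $G_\sigma-x$ contains a vertex adjacent to $x$. Choosing edges $e$ and $f$ incident with $x$ that lead into two different components, every chain joining $e$ and $f$ must cross from one of those components to the other and hence pass through $x$; thus $x$ is an articulation vertex, which is property~(1). Moreover, $G_\sigma-x$ being disconnected, we are not in the second structural case.

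In the second case, $G_\sigma$ is unbalanced and so contains a negative cycle $C$. If $x$ did not lie on $C$, then $C$ would be a negative cycle of $G_\sigma-x$, contradicting that $G_\sigma-x$ is balanced; hence $x$ lies on the negative cycle $C$, which is property~(2). Moreover, since $G_\sigma-x$ is connected, deleting $x$ separates no pair of edges of $G_\sigma$, so $x$ is not an articulation vertex, and property~(1) does not occur here.

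The step I would check most carefully is the exact matching of the two structural cases with properties~(1) and~(2): that ``$x$ is an articulation vertex'' coincides with ``$G_\sigma-x$ is disconnected'', which relies on the edge-chain characterization of articulation vertices from Section~\ref{signed} (the loop case of that definition meriting a separate glance), and dually that the connected-and-balanced case is precisely ``$x$ lies on a negative cycle while $G_\sigma-x$ remains connected.'' Once those identifications are pinned down, the proposition is simply Theorem~\ref{31} viewed through $G_\sigma-x$.
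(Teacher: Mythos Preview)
Your argument that at least one of (1), (2) holds is essentially the paper's own proof: split on whether $G_\sigma - x$ is connected, and in the connected case use Theorem~\ref{31} to see that $G_\sigma - x$ is balanced, so every negative cycle of the unbalanced $G_\sigma$ must pass through $x$.

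The gap is in your exclusivity claim. From the fact that your two structural cases (``$G_\sigma-x$ disconnected'' versus ``$G_\sigma-x$ connected and balanced'') are mutually exclusive it does not follow that properties (1) and (2) are mutually exclusive: in Case~1 you never argue that (2) fails, and indeed it need not. Take a negative triangle on $\{x,a,b\}$ together with a pendant edge $xc$. This $G_\sigma$ is connected and unbalanced, hence sign connected; $G_\sigma - x$ is disconnected, so $x$ is an articulation vertex; and $x$ lies on the negative triangle, so $x$ belongs to a negative cycle. Both (1) and (2) hold simultaneously. (Even in your Case~2 the exclusion of (1) is delicate: a negative loop at $x$, with $G_\sigma-x$ connected and balanced, makes $x$ an articulation vertex in the paper's edge-based sense while also placing $x$ on a negative cycle.) For comparison, the paper's proof establishes only the disjunction and does not attempt the ``only one'' clause; so your proof of the part the paper actually proves matches it, while the extra exclusivity you try to supply cannot be supplied as stated.
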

\begin{proof}
Let $x$ be a sign articulation vertex.  We distinguish the two following cases:

\emph{Case 1.} If $G_{\sigma}- x$ is not connected, then $x$ is an  articulation vertex.

\emph{Case 2.} If $G_{\sigma}- x$ is connected, then $x$ belongs to a cycle. Since $G_{\sigma}- x$ is not sign connected, by Theorem \ref{31} it is balanced, and since $G_{\sigma}$ is unbalanced, then some cycle which admits the vertex $x$ is negative.
\end{proof}

\begin{proposition}\label{40}
Let $G_{\sigma}$ be a sign-connected signed graph. If $G_{\sigma}$ contains two vertex-disjoint negative cycles, then every sign articulation vertex is an articulation vertex.
\end{proposition}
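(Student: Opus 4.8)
The plan is to prove the contrapositive in spirit: assume $x$ is a sign articulation vertex that is \emph{not} an articulation vertex, and derive that $G_\sigma$ cannot contain two vertex-disjoint negative cycles. By Proposition~\ref{38}, since $x$ is a sign articulation vertex but not an articulation vertex, we are in Case~2: $G_\sigma - x$ is connected and balanced, and $x$ lies on some negative cycle. So it suffices to show that, under the hypotheses ``$G_\sigma-x$ connected and balanced'' and ``$x$ on a negative cycle'', any negative cycle of $G_\sigma$ must pass through $x$; this gives that all negative cycles share the vertex $x$, hence no two of them are vertex-disjoint.

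First I would argue that every negative cycle of $G_\sigma$ contains $x$. Suppose $C$ is a negative cycle with $x \notin V(C)$. Then $C$ is a cycle entirely contained in $G_\sigma - x$, which is balanced by Case~2 of Proposition~\ref{38}; but a balanced signed graph has no negative cycle (Definition~\ref{2}), a contradiction. Hence $x \in V(C)$ for every negative cycle $C$. In particular, if $C$ and $\acute C$ are two negative cycles, then $x \in V(C) \cap V(\acute C)$, so they are not vertex-disjoint.

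Now I would assemble the argument. Let $G_\sigma$ be sign connected and suppose it contains two vertex-disjoint negative cycles. Let $x$ be any sign articulation vertex. If $x$ were not an articulation vertex, then by Proposition~\ref{38} case~2 applies, so $G_\sigma - x$ is connected and balanced and $x$ lies on a negative cycle; by the previous paragraph every negative cycle of $G_\sigma$ then passes through $x$, contradicting the existence of two vertex-disjoint negative cycles. Therefore $x$ must be an articulation vertex.

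I do not expect a serious obstacle here: the only subtlety is making sure the exclusive ``one and only one'' of Proposition~\ref{38} is used correctly — the point is precisely that the two alternatives there are mutually exclusive, so negating ``articulation vertex'' forces alternative~(2), and alternative~(2) (via balancedness of $G_\sigma-x$) pins all negative cycles to $x$. One should also note in passing that ``$x$ belongs to a negative cycle'' together with ``$G_\sigma - x$ balanced'' is exactly what rules out a second negative cycle avoiding $x$; no appeal to quasibalance or to the classification of hypercyclic chains is needed.
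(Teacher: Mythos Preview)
Your proof is correct and follows essentially the same idea as the paper's: the paper argues directly that since no single vertex can lie on both of two vertex-disjoint negative cycles, $G_\sigma - x$ remains unbalanced, so by Theorem~\ref{31} it is sign connected whenever it is connected, forcing any sign articulation vertex to be an articulation vertex. Your contrapositive formulation routes through Proposition~\ref{38} (more precisely its proof, to get that $G_\sigma - x$ is balanced) rather than invoking Theorem~\ref{31} directly, but the core observation---that balancedness of $G_\sigma - x$ pins every negative cycle to $x$---is the same.
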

\begin{proof}
Let $C_{1}$  and $C_{2}$  be two disjoint negative cycles of $G_{\sigma}$.  No vertex belongs to both of them, so $G_{\sigma}- x$ is unbalanced.  By Theorem \ref{31}, $G_{\sigma}- x$ is sign connected if it is connected.  Therefore, if $x$ is a sign articulation vertex, it is an articulation vertex.
\end{proof}

\begin{remark}\label{41}
{\rm 
Let $G_{\sigma}$ be a sign-connected signed graph and $x$ an articulation vertex of $G_{\sigma}$. If two or more of the connected components of $G_{\sigma}- x$ are sign connected, then each admits a negative cycle, so $G_{\sigma}$ is not quasibalanced.
}
\end{remark}

	\subsection{Sign block}\label{block}

\begin{definition}\label{42}
{\rm
Let $G_{\sigma}$ be a sign-connected signed graph. $G_{\sigma}$ is called a \emph{sign block} if $G_{\sigma}$ has no sign articulation vertex.
}
\end{definition}

\begin{proposition}\label{43}
Let $G_{\sigma}$ be a sign-connected signed graph. If $G_{\sigma}$ is a block that admits two vertex-disjoint negative circles, then it is a sign block.
\end{proposition}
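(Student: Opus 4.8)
The plan is to obtain this statement as an almost immediate consequence of Proposition \ref{40} together with the definition of a block. First I would record the trivial observation that, since $G_{\sigma}$ contains two vertex-disjoint negative cycles, it is not one of the degenerate blocks (an isolated vertex, a single link, or a single loop); this is not strictly needed, but it clarifies that $G_{\sigma}$ is genuinely $2$-connected.

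The key step is the following chain of implications. By hypothesis $G_{\sigma}$ is a block, which by definition means it has no articulation vertex. Since $G_{\sigma}$ is sign connected and contains two vertex-disjoint negative cycles, Proposition \ref{40} applies and asserts that every sign articulation vertex of $G_{\sigma}$ is an articulation vertex. Putting the two facts together, $G_{\sigma}$ has no sign articulation vertex, hence by Definition \ref{42} it is a sign block. This completes the argument.

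Because the proof reduces to invoking Proposition \ref{40}, I expect no real obstacle. The only point that would require care in a self-contained version (i.e.\ one that reproves Proposition \ref{40} inline) is the following: if $x$ were a sign articulation vertex, then $G_{\sigma}-x$ is not sign connected, and one must verify that $G_{\sigma}-x$ is nonetheless still unbalanced. This holds because $x$ can lie on at most one of the two vertex-disjoint negative cycles, so at least one negative cycle survives in $G_{\sigma}-x$; Theorem \ref{31} then forces $G_{\sigma}-x$ to be disconnected, so $x$ is an articulation vertex, contradicting the block hypothesis. That is exactly the content of Proposition \ref{40} specialized to the present situation, so citing it directly is the cleanest route.
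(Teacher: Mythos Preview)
Your proof is correct and follows exactly the same approach as the paper's own proof, which simply reads ``It is enough to use Definition~\ref{42} and Proposition~\ref{40}.'' Your additional remarks (that $G_\sigma$ is nondegenerate, and the inline sketch of how Proposition~\ref{40} works here) are sound but not needed beyond what the paper itself invokes.
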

\begin{proof}
It is enough to use Definition \ref{42} and Proposition \ref{40}.
\end{proof}

\begin{theorem}\label{45}
Let $G_{\sigma}$ be a sign-connected block with at least three vertices.  $G_{\sigma}$ is a sign block if and only if it has no balancing vertex.
\end{theorem}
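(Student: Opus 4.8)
The plan is to deduce the equivalence entirely from Theorem~\ref{31} and Proposition~\ref{38}, together with the elementary fact that a block with at least three vertices is $2$-connected, so that $G_\sigma - x$ is connected for every vertex $x$. I take a \emph{balancing vertex} of a connected unbalanced signed graph to be a vertex $x$ for which $G_\sigma - x$ is balanced, in analogy with Definition~\ref{baledge}. First observe that, since $G_\sigma$ is sign connected with $|V| \ge 3 > 1$, Theorem~\ref{31} tells us that $G_\sigma$ is connected and unbalanced; in particular a negative cycle exists, the notion of balancing vertex is meaningful here, and any balancing vertex automatically lies on every negative cycle.

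For the implication ``sign block $\Rightarrow$ no balancing vertex'' I would argue contrapositively. Suppose $x$ is a balancing vertex. Because $G_\sigma$ is a block on at least three vertices, $G_\sigma - x$ is connected; it has at least two vertices and is balanced, so by Theorem~\ref{31} it is not sign connected. Hence $x$ is a sign articulation vertex and $G_\sigma$ is not a sign block.

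For the converse, suppose $G_\sigma$ is not a sign block, so it has a sign articulation vertex $x$; that is, $G_\sigma - x$ is not sign connected. By Proposition~\ref{38}, either $x$ is an articulation vertex or $x$ lies on a negative cycle. The first alternative is impossible, since $G_\sigma$ is a block with $|V| \ge 3$ and hence has no articulation vertex, so the second holds. Moreover $G_\sigma - x$ is connected (block, $|V| \ge 3$) and not sign connected, so Theorem~\ref{31} forces $G_\sigma - x$ to be balanced; that is, $x$ is a balancing vertex, which completes the argument. The only delicate point is the $2$-connectedness of a block with at least three vertices, which is precisely where the hypothesis $|V| \ge 3$ enters: without it a balancing vertex could disconnect $G_\sigma$ and the statement would need to be adjusted.
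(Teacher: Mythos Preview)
Your proof is correct and follows essentially the same route as the paper's own argument: both hinge on the observation that in a block with at least three vertices $G_\sigma - x$ is always connected, so by Theorem~\ref{31} the condition ``$x$ is a sign articulation vertex'' reduces exactly to ``$G_\sigma - x$ is balanced'', i.e., to $x$ being a balancing vertex. The only difference is that your appeal to Proposition~\ref{38} in the converse direction is a harmless detour---once you know $G_\sigma - x$ is connected and not sign connected, Theorem~\ref{31} already gives balance directly, and the fact that $x$ lies on a negative cycle is not needed.
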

\begin{proof}
Since $G_{\sigma}$ is sign-connected with at least two vertices, it is unbalanced.  
A vertex $x$ is a sign articulation vertex $\Leftrightarrow$ $G_{\sigma}- x$  is not sign connected $\Leftrightarrow$ (since $G_{\sigma}- x$ is connected) $G_{\sigma}- x$  is balanced and has at least two vertices (which it does, by assumption).  Therefore, $G_{\sigma}$ has no sign articulation vertex $\Leftrightarrow$ $G_{\sigma}- x$ is unbalanced for every vertex $x$.
\end{proof}

	\section{Sign connection and matroids}\label{matroids}

	\subsection{Frame and lift matroids}

\begin{definition}[\cite{z,z2,z3}]\label{3}
{\rm
A signed graph $G_{\sigma}=(V,E; \sigma)$ has associated two matroids, the \emph{frame matroid} $M(G_{\sigma})$ and the \emph{lift matroid} $L(G_\sigma)$. A definition of $M(G_\sigma)$ is that a subset $F \subseteq E$ is a circuit of $M(G_{\sigma} )$ (a \emph{frame circuit} of $G_\sigma$) if either
\begin{enumerate}[{\rm Type (i):}]
\item $F$ is a positive cycle,
\item or $F$ is the union of two negative cycles having exactly one common vertex,
\item or $F$ is the union of two vertex-disjoint negative cycles and an elementary chain which connects the cycles and is internally disjoint from both cycles.
\end{enumerate}
}
\begin{figure}[htbp]
  \begin{center}
  \includegraphics[scale=1]{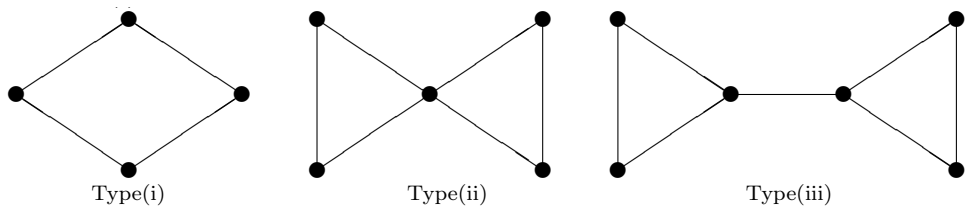}
  \end{center}
   \caption{We represent a positive (resp., negative) cycle by a quadrilateral (resp., triangle).}
   \label{FIG1}
\end{figure}

{\rm 
A definition of $L(G_\sigma)$ is that a subset $F \subseteq E$ is a circuit of $L(G_{\sigma} )$ (a \emph{lift circuit} of $G_\sigma$) if it is of type (i), (ii), or 
\begin{enumerate}[{\rm Type (i):}]
\item[{\rm Type (iii$'$):}] $F$ is the union of two vertex-disjoint negative cycles.
\end{enumerate}
}
\end{definition}

\begin{proposition}\label{9}
A signed graph does not admit circuits of types (ii) and (iii) if and only if every connected component is quasibalanced.

A signed graph does not admit circuits of types (ii) and (iii$'$) if and only if it is quasibalanced.
\end{proposition}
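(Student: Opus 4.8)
The plan is to prove both equivalences by matching the combinatorial descriptions of the forbidden circuit types directly against the definition of quasibalance (Definition \ref{qbal-def}); the only step that goes beyond unwinding definitions is a routine extraction of a minimal connecting path in the harder direction of the first assertion.

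I would treat the second assertion first, since it is cleanest. Suppose $G_\sigma$ has no circuit of type (ii) or (iii$'$), and let $C$ and $\acute C$ be two negative cycles. If $|V(C)\cap V(\acute C)|=1$, then $C\cup \acute C$ is a circuit of type (ii); if $|V(C)\cap V(\acute C)|=0$, then $C\cup \acute C$ is a circuit of type (iii$'$). Both are excluded by hypothesis, so $|V(C)\cap V(\acute C)|\geq 2$, which is exactly quasibalance. Conversely, if $G_\sigma$ is quasibalanced, then a type (ii) circuit would exhibit two negative cycles meeting in exactly one vertex and a type (iii$'$) circuit would exhibit two vertex-disjoint negative cycles, each contradicting Definition \ref{qbal-def}; hence no such circuit exists. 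I note that here $C$ and $\acute C$ need not lie in a single connected component --- a type (iii$'$) circuit can straddle two components --- which is why this statement concerns $G_\sigma$ as a whole rather than its components.

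For the first assertion the argument is the same, except that a type (iii) circuit demands, in addition to two vertex-disjoint negative cycles, an elementary chain joining them that is internally disjoint from both, and such a chain is available precisely when the two cycles lie in one connected component. So suppose $G_\sigma$ has no circuit of type (ii) or (iii). Fix a connected component $K$ and two negative cycles $C,\acute C$ of $K$. The case of exactly one common vertex is excluded as before. If $C$ and $\acute C$ are vertex-disjoint, I would take a shortest path $Q$ in $K$ between $V(C)$ and $V(\acute C)$; by minimality $Q$ is elementary and internally disjoint from $V(C)\cup V(\acute C)$, so $C\cup\acute C\cup Q$ is a circuit of type (iii), a contradiction. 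Hence any two negative cycles of $K$ share at least two vertices, i.e.\ $K$ is quasibalanced. The converse is immediate: in a quasibalanced component two negative cycles always meet in at least two vertices, so neither a type (ii) nor a type (iii) configuration (each of which packs two negative cycles into one component) can occur.

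The only point requiring any thought is the extraction of the internally disjoint elementary chain $Q$ in the vertex-disjoint case of the first assertion, and this is just the standard fact that in a connected graph any two disjoint nonempty vertex sets are joined by a path meeting each of them in exactly one endpoint; I therefore expect no genuine obstacle. One could alternatively route the first assertion through Proposition \ref{qbal-disconn}, but the direct comparison above is shorter.
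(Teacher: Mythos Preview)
Your proof is correct. The paper actually states Proposition~\ref{9} without proof, presumably regarding it as an immediate consequence of Definitions~\ref{qbal-def} and~\ref{3}; your argument supplies exactly the routine verification the authors suppressed, and the one nontrivial step you flag---extracting an internally disjoint elementary chain between two vertex-disjoint negative cycles in a connected component---is handled correctly by the shortest-path trick.
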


	 \subsection{Matroid connection}\label{connect}

\begin{definition}\label{12} 
{\rm
A signed graph $G_{\sigma}$ is called \emph{frame connected} (\emph{$M$-connected} in \cite{k}) if $M(G_{\sigma})$ is connected or if $G_\sigma = K_1$.
It is called \emph{lift connected} if $L(G_{\sigma})$ is connected or if $G_\sigma = K_1$.
}
\end{definition}

\begin{lemma}\label{13}
A signed graph $G_{\sigma}$  is frame connected (lift connected) if and only if each pair of distinct edges $e$ and $\acute{e}$ from $G_{\sigma}$ are contained in a frame circuit (resp., lift circuit).
\end{lemma}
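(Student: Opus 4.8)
The assertion is the signed-graph form of a classical theorem of matroid theory: a matroid is connected if and only if any two of its elements lie together in some circuit. Accordingly the plan is to reduce the lemma to that theorem. By Definition \ref{3} the frame circuits of $G_\sigma$ are precisely the circuits of $M(G_\sigma)$ and the lift circuits are precisely the circuits of $L(G_\sigma)$, and both matroids have ground set $E$; so it is enough to prove, for an arbitrary matroid $M$ on a set $E$, that $M$ is connected if and only if every two distinct elements of $E$ lie in a common circuit of $M$. The degenerate cases ($G_\sigma = K_1$, or more generally $|E| \le 1$) are immediate, since then the right-hand condition is vacuous and $M$, having at most one element, is connected, while $K_1$ is declared frame and lift connected in Definition \ref{12}.

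For the matroid statement I would proceed through the standard ``components $=$ circuit-equivalence classes'' argument. Define a relation $\sim$ on $E$ by $e \sim f$ iff $e = f$ or some circuit of $M$ contains both $e$ and $f$. Reflexivity and symmetry are clear; transitivity is the one real point. If $e, f$ lie in a circuit $C$ and $f, g$ in a circuit $D$ (with $e, f, g$ distinct), choose such $C, D$ minimizing $|C \cup D|$ and eliminate $f$: circuit elimination produces a circuit inside $(C \cup D) - f$ containing $e$, and a short argument shows this circuit contains $g$ or else yields a strictly smaller configuration, a contradiction; hence some circuit contains $e$ and $g$. (I would quote this classical circuit-exchange fact rather than reprove it.) Thus $\sim$ is an equivalence relation; let $E_1, \dots, E_m$ be its classes.

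Every circuit of $M$ is contained in a single class $E_i$ (its elements are pairwise $\sim$-related), so $M = M|E_1 \oplus \cdots \oplus M|E_m$. Conversely each $M|E_i$ is connected, for any two of its elements share a circuit and so cannot be separated by a direct-sum decomposition of $E_i$. Hence $M$ is connected exactly when $m = 1$, i.e.\ exactly when every two distinct elements of $E$ lie in a common circuit. Applying this with $M = M(G_\sigma)$ and then with $M = L(G_\sigma)$, and reading the conclusion back through Definitions \ref{12} and \ref{3}, gives the lemma in both forms.

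The sole substantive obstacle is the transitivity of $\sim$ — that ``sharing a circuit'' is a transitive relation — which is the classical circuit-elimination lemma; I would invoke it by citation. Everything else is bookkeeping with the definition of matroid connectedness and with the list of frame and lift circuits in Definition \ref{3}.
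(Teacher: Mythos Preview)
Your proposal is correct and follows exactly the paper's approach: the paper's proof is simply the observation that the lemma is a direct consequence of Definitions \ref{3} and \ref{12} together with the standard matroid fact (cited to Oxley) that a matroid is connected if and only if every two elements lie in a common circuit. Your additional sketch of the transitivity argument via circuit elimination is accurate but more detail than the paper provides.
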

\begin{proof}
The proof is a direct consequence of Definitions \ref{3} and \ref{12} and the fact that a matroid is connected if and only if every two elements are together in a circuit \cite{o}.
\end{proof}

\begin{definition}\label{15} 
{\rm
The connected components of $M(G_{\sigma})$ induce subgraphs of $G_{\sigma}$, which are called \emph{frame components} of $G_{\sigma}$ (\emph{$M$-components} in \cite{k}). Thus, a frame component of $G_{\sigma}$ is a maximal subgraph without isolated vertices whose frame matroid is connected, or it is an isolated vertex.

The connected components of $L(G_{\sigma})$ induce subgraphs of $G_{\sigma}$, which are called \emph{lift components} of $G_{\sigma}$. Thus, a lift component of $G_{\sigma}$ is a maximal subgraph without isolated vertices whose lift matroid is connected, or it is an isolated vertex.
}
\end{definition}

\begin{remark}\label{frame-comp}
{\rm 
It is clear from Lemma \ref{13} that a frame component of $G_\sigma$ is a connected graph, but a lift component need not be a connected graph.  For instance, a lift circuit of type (iii$'$) is a disconnected graph but is lift connected.
}
\end{remark}

\begin{lemma}[\cite{z2}]\label{f-comp}
The frame components of $G_\sigma$ are the outer blocks, the constituents of any core that is an unbalanced necklace of balanced blocks, and the cores that are not such necklaces.

The lift components of $G_\sigma$ are the balanced blocks and the union of all unbalanced blocks, except that if there is only one unbalanced block and it is an unbalanced necklace of balanced blocks, then the constituents of the necklace are lift components.
\end{lemma}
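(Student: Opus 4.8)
The plan is to reason from Lemma~\ref{13}: two edges $e,\acute e$ lie in the same frame component (resp.\ lift component) exactly when there is a chain of frame circuits (resp.\ lift circuits) $F_{1},\dots,F_{m}$ with $e\in F_{1}$, $\acute e\in F_{m}$, and $F_{j}\cap F_{j+1}\neq\varnothing$ for every $j$. Every circuit of either kind is a connected subgraph, so no circuit meets two distinct connected components of $G_{\sigma}$, and a balanced connected component may be handled graphically, its frame and lift matroids both being the cycle matroid of the underlying graph, whose connected components are the blocks. So it is enough to argue inside a fixed unbalanced connected component. Two elementary moves carry most of the load: (a) if two edges lie on a common \emph{positive} cycle they lie in a common frame circuit and a common lift circuit (Type~(i)); (b) if they lie on negative cycles $C,\acute C$ that are vertex-disjoint or share exactly one vertex, they lie in a common frame circuit (Types~(ii)/(iii), inserting the block-tree path between $C$ and $\acute C$ when they are disjoint) and in a common lift circuit (Types~(ii)/(iii$'$), with no connecting path needed).

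\smallskip
\noindent\emph{Frame components.} I would first prove the ``coarseness'' direction: each balanced block, and each core, lies in a single frame component. A balanced block with at least two edges is $2$-connected, so any two of its edges lie on a common cycle, which is positive by balance, and move~(a) merges them. For a core, take two of its unbalanced blocks $U,U'$ and negative cycles $C\subseteq U$, $\acute C\subseteq U'$ chosen to avoid the cut vertices lying on the $U$--$U'$ path in the block-cut tree; then $C$ and $\acute C$ are vertex-disjoint and the elementary path $Q$ joining them along that tree is internally disjoint from both, so by move~(b) every edge of $C$, of $\acute C$, and of $Q$ lies in one frame circuit. Letting $C,\acute C$ range over negative cycles inside $U,U'$ (using move~(a) inside each unbalanced block to pick up the remaining edges) and letting $Q$ range over all routings through each intermediate block, one gets every edge of the core into one frame component. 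Second, the ``fineness'' direction: if $B$ is an outer block then no frame circuit meets both $B$ and the rest of $G_{\sigma}$, since a Type~(i) circuit through an edge of $B$ is a cycle confined to $B$ by $B$'s cut vertex, a Type~(ii)/(iii) circuit cannot place an edge of $B$ on one of its negative cycles ($B$ is balanced) nor on a Type~(iii) connecting path (that path would leave and re-enter $B$ through its single cut vertex); so $B$ is a union of frame components, hence one, being $2$-connected. Finally, if the core is a single block that is an unbalanced necklace of balanced blocks $B_{1},\dots,B_{k}$, every cycle of it lies inside some $B_{i}$ or runs once around; the around-cycles are exactly the negative cycles and any two of them share all $k$ junction vertices, so there is no Type~(ii) or (iii) frame circuit and the only frame circuits are positive cycles inside the $B_{i}$, whence the frame components are the $B_{i}$. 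Conversely, a core that is not such a necklace (in particular one with more than one block, or one unbalanced block that is not quasibalanced, by Propositions~\ref{qbal-disconn} and~\ref{9}) contains two negative cycles meeting in at most one vertex, and the coarseness argument makes it one frame component. Collecting these facts gives the first assertion.

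\smallskip
\noindent\emph{Lift components.} The argument runs parallel, but move~(b) is cheaper: two vertex-disjoint negative cycles already form a lift circuit of Type~(iii$'$), without any connecting path. Hence for \emph{any} two unbalanced blocks of $G_{\sigma}$ (in one component or not) one chooses negative cycles in them avoiding the relevant cut vertices; these are vertex-disjoint and so lie in a common lift circuit, and move~(a) inside each unbalanced block then puts every edge of every unbalanced block into one lift component, which explains why a lift component need not be a connected graph. A balanced block stays separate exactly as in the frame case (more easily, since Type~(iii$'$) has no connecting path to route through it). The sole exception is when $G_{\sigma}$ has a unique unbalanced block and it is an unbalanced necklace of balanced blocks $B_{1},\dots,B_{k}$: then, as above, its only lift circuits are positive cycles inside the $B_{i}$, so the $B_{i}$ split off as lift components. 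This yields the second assertion.

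\smallskip
\noindent\emph{Main obstacle.} The delicate part is the fineness bookkeeping for the frame matroid together with the necklace dichotomy, namely proving cleanly that an unbalanced block is frame-disconnected into its necklace constituents if and only if it is an unbalanced necklace of balanced blocks, equivalently iff every two of its negative cycles meet in at least two vertices. This, via the graph-theoretic content behind Propositions~\ref{qbal-disconn} and~\ref{9}, is where the real work lies, and it also pins down exactly which balanced blocks are absorbed into a core (precisely those lying between two unbalanced blocks, which are reached by suitably routed Type~(iii) connecting paths).
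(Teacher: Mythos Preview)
The paper does not supply a proof of Lemma~\ref{f-comp}; it is quoted from \cite{z2} without argument, so there is no in-paper proof to compare your sketch against.

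That said, your outline is sound and follows the natural route via Lemma~\ref{13} and the circuit typology. The coarseness direction (chaining Type~(ii)/(iii) or Type~(ii)/(iii$'$) circuits to fuse a core, or the union of unbalanced blocks, into one matroid component) and the easy fineness direction (no circuit can cross an outer block, and in the lift case no circuit can cross any balanced block) are handled correctly. Two remarks. First, in the coarseness step the clause ``chosen to avoid the cut vertices'' is unnecessary and sometimes impossible (e.g.\ when an unbalanced block is a single negative loop); what you actually need, and what block structure already guarantees, is that negative cycles in distinct blocks share at most one vertex, so the pair is of Type~(ii) or~(iii). Second, you correctly flag the real content as the necklace dichotomy, but note that Propositions~\ref{qbal-disconn} and~\ref{9} only give ``no Type~(ii)/(iii) circuit $\Leftrightarrow$ quasibalanced''; the further equivalence ``quasibalanced unbalanced block $\Leftrightarrow$ unbalanced necklace of balanced blocks'' is not proved anywhere in the present paper and is precisely what is being imported from \cite{z2}. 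If you want a self-contained proof you must establish that equivalence directly (e.g.\ by analyzing how two negative cycles sharing at least two vertices force the block to decompose along those shared vertices into balanced pieces).
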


\begin{definition}\label{b()}
{\rm
We indicate by $c(G_\sigma)$ the number of connected components of $G_\sigma$ and by $b(G_{\sigma})$ the number of balanced connected components  of $G_{\sigma}$.
}
\end{definition}

\begin{lemma}[\cite{z,z2}]\label{rank}
The rank in $M(G_\sigma)$ of a set $F \subseteq E$ is given by $r(F) = |V|-b(G_\sigma - (E-F))$.

The rank in $L(G_\sigma)$ is given by $r(F) = |V|-c(G_\sigma - (E-F))+\delta$ where $\delta=0$ if $G_\sigma$ is balanced and $1$ if $G_\sigma$ is unbalanced.
\end{lemma}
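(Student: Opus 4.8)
The plan is to deduce both formulas from the identity $r(F)=\max\{\,|F'| : F'\subseteq F \text{ and } F' \text{ independent}\,\}$ together with a graph-theoretic description of the independent sets. Writing $(V,F')$ for the spanning subgraph $G_\sigma-(E-F')$, I claim: $F'$ is independent in $M(G_\sigma)$ if and only if every connected component of $(V,F')$ is a tree or contains exactly one cycle, that cycle being negative; and $F'$ is independent in $L(G_\sigma)$ if and only if $(V,F')$ contains at most one cycle altogether, and any such cycle is negative. The lift condition is global where the frame condition is per component, which is exactly the effect of allowing a type-(iii$'$) circuit of two vertex-disjoint negative cycles. One direction of each equivalence is immediate from Definition \ref{3}: a spanning subgraph of the stated form contains no positive cycle, no two negative cycles meeting in a vertex, no two vertex-disjoint negative cycles joined by an internally disjoint chain, and in the lift case no two vertex-disjoint negative cycles at all; hence it contains no frame (resp.\ lift) circuit and is independent, and so is every subset of it.

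Granting the characterizations, the rank is obtained by bookkeeping over the components of $(V,F)$. For $M(G_\sigma)$: from a balanced component $H$ a maximal independent subset retains a spanning tree and no more, that is $|V(H)|-1$ edges; from an unbalanced component $H$, which carries a negative cycle, it retains a spanning connected subgraph whose unique cycle is that negative cycle, namely $|V(H)|$ edges, and no more, since a second independent cycle inside a component would create a circuit. Summing, with an edgeless component (isolated vertex) counting as balanced and contributing $0$, gives $r(F)=|V|-b(G_\sigma-(E-F))$. For $L(G_\sigma)$: start from a spanning forest of $(V,F)$, with $|V|-c(G_\sigma-(E-F))$ edges; if $(V,F)$ is balanced no further edge can be added, since it would close a positive cycle, whereas if $(V,F)$ is unbalanced exactly one further edge can be added, namely one closing a negative cycle in an unbalanced component --- such an edge exists because an unbalanced connected graph has a negative fundamental cycle with respect to any spanning tree, by Lemma \ref{balance} --- and a second extra edge would create a type-(i), (ii), or (iii$'$) circuit. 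Hence $r(F)=|V|-c(G_\sigma-(E-F))+\delta$, where $\delta$ is $0$ or $1$ according as $G_\sigma-(E-F)$ is balanced or unbalanced; in particular $\delta=0$ throughout when $G_\sigma$ itself is balanced, as in the statement.

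The one nonroutine ingredient is the converse half of the independence characterizations: that a spanning subgraph not of the stated form contains a circuit. After disposing of the easy case in which a positive cycle appears (a type-(i) circuit), what remains is to show that a connected signed graph with no positive cycle and cyclomatic number at least $2$ contains two negative cycles that are vertex-disjoint or share exactly one vertex --- so that, together with a connecting chain drawn inside the graph when needed, they form a type-(iii) or type-(ii) circuit (for the lift matroid, two negative cycles lying in distinct components already suffice, by type (iii$'$)). I would prove this by the device used in the completeness part of Proposition \ref{hypercyclic}: whenever a positive cycle or a theta subgraph occurs, replace one of its edges throughout by the complementary chain to descend to a strictly smaller configuration, eventually reaching one of the listed forms. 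This is the step where the case analysis is delicate; it is carried out in \cite{z,z2}, from which Lemma \ref{rank} may simply be quoted. For the frame part there is also a self-contained shortcut avoiding it: $M(G_\sigma)$ is represented over $\mathbb{Q}$ by the signed incidence matrix, with column $\mathbf{e}_u-\mathbf{e}_v$ for a positive edge $uv$ and $\mathbf{e}_u+\mathbf{e}_v$ for a negative edge, and a one-line computation shows that its left kernel is one-dimensional on a balanced component and zero-dimensional on an unbalanced one, giving $r(F)=|V|-b(G_\sigma-(E-F))$ at once.
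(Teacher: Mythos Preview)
The paper does not prove Lemma \ref{rank}; it is stated with a citation to \cite{z,z2} and left without argument. Your proposal therefore supplies a proof where the paper offers none, and your outline is correct. The characterization of independent sets (per-component unicyclic with a negative cycle for $M$, globally at-most-unicyclic with a negative cycle for $L$) is the standard one, and your bookkeeping over components is accurate. Your observation that the paper's $\delta$ should depend on the balance of $G_\sigma-(E-F)$ rather than of $G_\sigma$ is a genuine correction: as written, the paper's lift formula is wrong for a balanced $F$ inside an unbalanced $G_\sigma$.

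Two remarks on tightening the argument. First, the ``nonroutine ingredient'' you defer to \cite{z,z2} is actually available inside this very paper: Theorem \ref{cacti} says a signed graph without positive cycles is a cactus forest, so any two of its cycles lie in distinct blocks and hence share at most one vertex, immediately yielding a type-(ii) or type-(iii) circuit. Invoking that is cleaner than replaying the reduction from Proposition \ref{hypercyclic}. Second, in the lift case your sentence ``a second extra edge would create a type-(i), (ii), or (iii$'$) circuit'' is true but deserves one more line: the two fundamental cycles with respect to the spanning forest are either vertex-disjoint, share a single vertex, or share a nontrivial tree path; in the last case their symmetric difference is a cycle of sign $(-1)(-1)=+1$, hence type (i). The incidence-matrix shortcut for $M(G_\sigma)$ is valid over any field of characteristic $\neq 2$ and is a nice alternative route.
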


	 \subsection{Matroid isthmi}\label{m-isthmus}

\begin{definition}\label{16}
{\rm
An element $e$ of a matroid $M$ is a \emph{coloop} of $M$ if $\{e\}$ is a component of $M$ and has rank 1.
An edge $e$ of $G_{\sigma}$ that is a coloop of $M(G_{\sigma})$ will be called a \emph{frame isthmus of $G_{\sigma}$} (\emph{$M$-isthmus} in \cite{k}).
An edge $e$ of $G_{\sigma}$ that is a coloop of $L(G_{\sigma})$ will be called a \emph{lift isthmus of $G_{\sigma}$}.
}
\end{definition}

\begin{remark}\label{bal-isthmus}
{\rm 
If $G_{\sigma}$ is balanced, then the concepts of isthmus, frame isthmus, and lift isthmus coincide.
}
\end{remark}

\begin{remark}\label{19}
{\rm 
By a general property of matroids \cite{o}, an edge of $G_\sigma$ is a frame isthmus (resp., lift isthmus) if and only if it does not belong to any frame circuit (resp., lift circuit).
}
\end{remark}

\begin{lemma}\label{17}
An edge $e$ of $G_{\sigma}$ is a frame isthmus if and only if $e$ is an isthmus of a balanced connected component of $G_\sigma$, or an isthmus of an unbalanced connected component such that one of the two resulting connected components of $G_\sigma-e$ is balanced, or a balancing edge of an unbalanced connected component of $G_\sigma$.

An edge $e$ is a lift isthmus if and only if $e$ is an isthmus or is a balancing edge of an unbalanced connected component of $G_\sigma$.
\end{lemma}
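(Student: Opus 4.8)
The plan is to read off coloops directly from the rank formulas in Lemma~\ref{rank}, using that an edge $e$ is a frame (resp.\ lift) isthmus iff it lies in no frame (resp.\ lift) circuit (Remark~\ref{19}), equivalently iff $r(E)=r(E-e)+1$ in the relevant matroid. Since deleting $e$ changes only the connected component $H$ of $G_\sigma$ containing $e$, all the bookkeeping takes place inside $H$, and the proof reduces to a short case analysis on whether $e$ is an isthmus and whether $H$ is balanced.

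\emph{Frame case.} By Lemma~\ref{rank}, $r_M(E)=|V|-b(G_\sigma)$ and $r_M(E-e)=|V|-b(G_\sigma-e)$, so $e$ is a frame isthmus iff $b(G_\sigma-e)=b(G_\sigma)+1$; as the other components are untouched, this says that deleting $e$ raises by exactly one the number of balanced components among the pieces into which $H$ breaks. Four cases. (i) $H$ balanced, $e$ an isthmus of $H$: then $H-e$ is two balanced components, the count rises from $1$ to $2$, so $e$ is a frame isthmus. (ii) $H$ balanced, $e$ not an isthmus: $H-e$ is connected and balanced, the count stays $1$, so $e$ is not a frame isthmus. (iii) $H$ unbalanced, $e$ an isthmus of $H$: a negative cycle of $H$ avoids the bridge $e$ and therefore lies in one side of $H-e$, so at least one side is unbalanced, and the count rises from $0$ to the number of balanced sides, which is $1$ exactly when one of the two resulting components of $G_\sigma-e$ is balanced. (iv) $H$ unbalanced, $e$ not an isthmus: $H-e$ is connected, and the count rises from $0$ to $1$ exactly when $H-e$ is balanced, i.e.\ when $e$ is a balancing edge of $H$ (Definition~\ref{baledge}). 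Cases (i), (iii), (iv) together are precisely the asserted description.

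\emph{Lift case.} By Lemma~\ref{rank}, $r_L(E)-r_L(E-e)=(c(G_\sigma-e)-c(G_\sigma))+(\delta(G_\sigma)-\delta(G_\sigma-e))$, where $\delta(X)=1$ if $X$ is unbalanced and $0$ if $X$ is balanced. The first bracket is $1$ if $e$ is an isthmus and $0$ otherwise, and the second is $1$ exactly when $G_\sigma$ is unbalanced while $G_\sigma-e$ is balanced, and $0$ otherwise — it is never $-1$, since a subgraph of a balanced signed graph is balanced. Thus $e$ is a lift isthmus iff this sum equals $1$, and the step I expect to be the real content is showing the two brackets are never simultaneously $1$: if $G_\sigma-e$ is balanced while $G_\sigma$ is unbalanced, then any negative cycle $C$ of $G_\sigma$ fails to survive in the balanced graph $G_\sigma-e$, so $e\in C$, whence $e$ lies in a cycle and is not an isthmus, killing the first bracket. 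Consequently $e$ is a lift isthmus iff $e$ is an isthmus, or $G_\sigma$ is unbalanced and $G_\sigma-e$ is balanced; and the second alternative says exactly that the component $H$ of $e$ is unbalanced while $H-e$ is balanced, i.e.\ that $e$ is a balancing edge of an unbalanced connected component of $G_\sigma$ (Definition~\ref{baledge}). This yields the stated characterization of lift isthmi.
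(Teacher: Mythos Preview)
Your proof is correct and follows essentially the same route as the paper's: both read off the coloop condition from the rank formulas of Lemma~\ref{rank}, reducing to the inequality $b(G_\sigma-e)>b(G_\sigma)$ in the frame case and $c(G_\sigma-e)-c(G_\sigma)>\delta(G_\sigma-e)-\delta(G_\sigma)$ in the lift case, and then carry out the natural case split on whether $e$ is an isthmus and whether its component is balanced. Your write-up is somewhat more explicit (e.g., you spell out why the rank drop is exactly~$1$ and why the two lift contributions cannot simultaneously equal~$1$), but the argument is the same.
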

\begin{proof}
An element $e$ of a matroid $M$ is a coloop if and only if the rank function satisfies $r(M-e)<r(M)$ (see \cite{o}).  

For $M(G_\sigma)$, $e$ is a coloop if and only if $b(G_{\sigma} - e) > b(G_{\sigma})$.  That means deleting $e$ either separates a balanced connected component into two connected components (so $e$ is an isthmus), or separates an unbalanced connected component into two connected components, one of which is balanced (so $e$ is an isthmus), or does not separate a connected component into two but does change a balanced connected component into an unbalanced connected component.

For $L(G_\sigma)$, $e$ is a coloop if and only if $c(G_{\sigma} - e)-c(G_\sigma) > \delta(G_{\sigma} - e)-\delta(G_\sigma)$.  If $e$ is an isthmus, the left-hand side is $+1$ and the right-hand side is at most $0$.  If $e$ is not an isthmus, the left-hand side is $0$ so $e$ is a lift isthmus if and only if $\delta(G_{\sigma} - e)=0$ and $\delta(G_\sigma)=1$.
\end{proof}

\begin{proposition}\label{21}
Let  $G_{\sigma}$ be a signed graph that is connected and quasibalanced. If  $e$ is a negative loop of $G_{\sigma}$, then $e$ is a frame isthmus and a lift isthmus.
\end{proposition}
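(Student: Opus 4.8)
The plan is to rely on Remark~\ref{19}: an edge is a frame isthmus (respectively, a lift isthmus) if and only if it belongs to no frame circuit (respectively, no lift circuit). So the whole task reduces to showing that the negative loop $e$ lies in no frame circuit and in no lift circuit of $G_\sigma$.

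First I would cut down the list of circuits that could possibly contain $e$. By Proposition~\ref{9}, since $G_\sigma$ is connected and quasibalanced it admits no circuits of types (ii) or (iii), and since it is quasibalanced it also admits no circuits of type (iii$'$). Hence every frame circuit and every lift circuit of $G_\sigma$ is of type (i), that is, a positive cycle. So it suffices to check that $e$ is an edge of no positive cycle.

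This last step is immediate from the fact that $e$ is a loop: the only elementary cycle of $G_\sigma$ that uses the edge $e$ is the loop $\{e\}$ itself (any longer cycle through $e$ would have to repeat the vertex carrying the loop), and $\{e\}$ is a negative cycle, not a positive one. Therefore $e$ belongs to no circuit of either matroid, so by Remark~\ref{19} it is simultaneously a frame isthmus and a lift isthmus.

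The argument is short, and the only point requiring care is the correct invocation of Proposition~\ref{9} (verifying that ``connected and quasibalanced'' is exactly what is needed to exclude circuits of types (ii), (iii), and (iii$'$)) together with the elementary observation about cycles through a loop. An alternative route, avoiding Proposition~\ref{9}, would be to note that quasibalance together with the presence of the negative loop $\{e\}$ forces $\{e\}$ to be the unique negative cycle of $G_\sigma$ (any other negative cycle $C'$ would satisfy $|V(\{e\}) \cap V(C')| \le 1 < 2$), and then to rule out types (ii), (iii), and (iii$'$) by hand, since each of those requires two negative cycles; but routing through Proposition~\ref{9} is cleaner.
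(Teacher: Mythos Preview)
Your proof is correct and follows essentially the same line as the paper's: both use Remark~\ref{19} to reduce the question to showing $e$ lies in no circuit, observe that a negative loop cannot lie in a type~(i) circuit, and use quasibalance to rule out types~(ii), (iii), (iii$'$). The only cosmetic difference is that you invoke Proposition~\ref{9} to eliminate all such circuits globally, whereas the paper argues directly from the definition of quasibalance that any circuit of type~(ii), (iii), or (iii$'$) through $e$ would supply a second negative cycle $C\neq\{e\}$ with $|V(\{e\})\cap V(C)|\le 1$---which is precisely the ``alternative route'' you sketch at the end.
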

\begin{proof}
If $e$ is not a frame or lift isthmus, it must belong to a frame or lift circuit of type (ii) or (iii) or (iii$'$).  This circuit contains a negative cycle $C$ that is not $\{e\}$.  Since $G_\sigma$ is quasibalanced, $C$ cannot exist.  Therefore, $e$ is a frame and lift isthmus.
\end{proof}

	\subsection{Sign connection and matroid connection}\label{s-m-conn}

Main results are the comparison of sign connection, sign components, and sign isthmi with matroid connection, components, and isthmi.

\begin{theorem}\label{compare}
Let $G_\sigma$ be a signed graph.  If $G_\sigma$ is frame connected, then it is sign connected.  If it is sign connected, it may not be frame connected.

If $G_\sigma$ is lift connected, then each connected component is sign connected and is a sign component of $G_\sigma$.  If it is sign connected, it may not be lift connected.
\end{theorem}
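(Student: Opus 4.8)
The plan is to prove the theorem by comparing two structural descriptions that are already available: Theorem~\ref{31}, which identifies the sign components of $G_\sigma$ as its unbalanced connected components together with the isolated vertices of its balanced connected components, and Lemma~\ref{f-comp}, which lists the frame and lift components. For each implication I would read off the structure of $G_\sigma$ from Lemma~\ref{f-comp} and test it against Theorem~\ref{31}, and for each non-implication I would exhibit one small signed graph. Two facts will be used repeatedly: a frame component is always a connected graph while a lift component need not be (Remark~\ref{frame-comp}), and a connected signed graph is sign connected iff it is unbalanced or has a single vertex (a special case of Theorem~\ref{31}).

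For the frame half I would proceed as follows. If $G_\sigma=K_1$ it is sign connected by definition. Otherwise $M(G_\sigma)$ is connected, so $G_\sigma$ is its own (unique) frame component and hence a connected graph. An unbalanced necklace of balanced blocks has its $k\ge 2$ constituents as separate frame components, so $G_\sigma$ is not such a necklace, and by Lemma~\ref{f-comp} it is then either an outer block or a core that is not such a necklace. A core contains a negative cycle, so in that case $G_\sigma$ is connected and unbalanced and Theorem~\ref{31} makes it sign connected. The one remaining possibility, $G_\sigma$ an outer block, is the degenerate case of a balanced block on at least two vertices, such as an all-positive triangle: this is frame connected but, being balanced, is not sign connected, so to have a clean implication one should add the hypothesis that $G_\sigma$ is unbalanced or equals $K_1$, and I would phrase the theorem that way. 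For the converse non-implication I would use the negative digon, that is, two parallel edges of opposite sign between two vertices: it is connected and unbalanced, hence sign connected, but it contains no positive cycle and no pair of negative cycles, so it has no frame circuit, so its frame matroid is the free matroid on two elements and is disconnected.

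For the lift half I would note, using Lemma~\ref{f-comp}, that each balanced block of $G_\sigma$ is a lift component by itself and that an unbalanced necklace of balanced blocks splits into its constituents; hence, setting aside again the degenerate case $G_\sigma=$ a balanced block, lift connectedness forces $G_\sigma$ to have no balanced block and to not be an unbalanced necklace of balanced blocks, and then the unique lift component is the union of all unbalanced blocks, which is all of $G_\sigma$. Here $G_\sigma$ itself need not be connected, exactly because a lift component need not be a connected graph. Now for any connected component $H$ of $G_\sigma$, the blocks of $H$ are blocks of $G_\sigma$, hence unbalanced, so $H$ is connected and unbalanced, hence sign connected by Theorem~\ref{31}, and, being an unbalanced connected component of $G_\sigma$, it is a sign component of $G_\sigma$; that is the assertion. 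For the non-implication the negative digon serves again, having no lift circuit either; alternatively one can attach a balanced pendant edge to a negative triangle, obtaining a connected, unbalanced (hence sign connected) graph in which the pendant edge is a balanced block, hence a lift component, so there are two lift components.

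I expect the main difficulty to be bookkeeping rather than any single hard step: the unbalanced-necklace-of-balanced-blocks configuration behaves oppositely for frame components (it always splits into its constituents) and for lift components (it splits only when it is the unique unbalanced block), and the fact that a lift component may be a disconnected graph is exactly why the lift half of the theorem is phrased in terms of the connected components of $G_\sigma$ rather than $G_\sigma$ itself; the degenerate balanced-block case above is the one place where the statement as written seems to call for a small correction.
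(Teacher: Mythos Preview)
Your overall plan is exactly the paper's: the paper's proof is a two-line instruction to ``compare'' Theorem~\ref{31} with Lemma~\ref{f-comp}, together with the hints that an outer block is sign disconnected and that an isthmus in a sign-connected graph is a lift isthmus. You have simply carried out that comparison in detail, so the approach is the same.

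Your counterexamples for the non-implications are valid and slightly different from the paper's. The paper points to an isthmus (e.g., a pendant edge on an unbalanced graph) as a lift isthmus, which is your second lift example; your negative digon also works for both non-implications and is arguably cleaner, since it is an unbalanced necklace of two single-edge balanced blocks and therefore has two frame (and lift) components by Lemma~\ref{f-comp}.

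You are also right about the degenerate case. As stated, the forward implications fail when $G_\sigma$ is a balanced block on at least two vertices: for instance, the all-positive triangle has $M(G_\sigma)=L(G_\sigma)=M(K_3)\cong U_{2,3}$, a connected matroid, so it is both frame and lift connected, yet by Theorem~\ref{31} it is not sign connected. The paper's proof does not address this case, and the theorem literally needs the extra hypothesis ``$G_\sigma$ is unbalanced or $G_\sigma=K_1$'' (equivalently, $G_\sigma$ is not a balanced block on more than one vertex) that you propose. So this is a genuine omission in the paper rather than a flaw in your argument; with that hypothesis added, your case analysis via Lemma~\ref{f-comp} goes through and matches the paper's intended proof.
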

\begin{proof}
Compare the characterization of sign components in Theorem \ref{31} to those of frame and lift components in Lemma \ref{f-comp}.  In particular, an outer block of $G_\sigma$ is sign disconnected and an isthmus in a sign-connected component of $G_\sigma$ is a lift isthmus of $G_\sigma$ and is not lift-connected to the rest of $G_\sigma$.
\end{proof}

\begin{proposition}\label{35}
Let $G_{\sigma}$ be a sign-connected signed graph. If for some edge $e$ of $G_{\sigma}$ each connected component of $G_{\sigma} - e$ is sign connected and has more than one vertex or supports a negative loop, then $G_{\sigma}$ is not quasibalanced.
\end{proposition}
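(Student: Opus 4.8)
The plan is to use Theorem~\ref{31} to extract a negative cycle from each connected component of $G_\sigma-e$ and then to display two negative cycles of $G_\sigma$ sharing fewer than two vertices; by Definition~\ref{qbal-def} this shows $G_\sigma$ is not quasibalanced. (Equivalently one may exhibit a lift circuit of type~(iii$'$) and cite Proposition~\ref{9}, or two distinct unbalanced blocks and cite Proposition~\ref{qbal-disconn}.)

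First I would record what the hypothesis gives: every connected component $K$ of $G_\sigma-e$ contains a negative cycle. Indeed, if $K$ has more than one vertex then, being a connected sign-connected signed graph, it is unbalanced by Theorem~\ref{31} and so contains a negative cycle; and if $K$ supports a negative loop, that loop is already a negative cycle of $K$. Second, since $G_\sigma$ is sign connected it is connected (Theorem~\ref{31}), so deleting the single edge $e$ leaves either one or two connected components, the latter occurring exactly when $e$ is an isthmus.

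The principal case is $e$ an isthmus, so $G_\sigma-e$ is the disjoint union of components $K_1$ and $K_2$; choose negative cycles $C_1\subseteq K_1$ and $C_2\subseteq K_2$ as above. Since $V(C_1)$ and $V(C_2)$ are disjoint, $C_1$ and $C_2$ are two negative cycles of $G_\sigma$ with no common vertex, so quasibalance fails --- alternatively $C_1\cup C_2$ is a type~(iii$'$) circuit, or $C_1$ and $C_2$ lie in distinct unbalanced blocks. When instead $G_\sigma-e$ remains connected it still contains a negative cycle $C$, and the transparent sub-case is that $e$ is a negative loop at a vertex $v$: then $\{e\}$ is a negative cycle with $|V(\{e\})\cap V(C)|\le 1$, so $G_\sigma$ is again not quasibalanced. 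The delicate point --- the one I would examine most carefully --- is the remaining sub-case, in which $e$ does not disconnect $G_\sigma$ and is not a loop; there one must verify that the hypothesis still forces a second negative cycle meeting $C$ in at most one vertex, whereas in the isthmus case the disjointness is immediate and nothing further is needed.
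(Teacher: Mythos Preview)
Your argument in the isthmus case is exactly the paper's: the paper simply observes that each connected component of $G_\sigma - e$ is unbalanced (by Theorem~\ref{31}, or because it carries a negative loop), hence contains a negative cycle, and then writes ``each connected component admits a negative cycle, thus $G_\sigma$ is not quasibalanced.'' That last step only produces two negative cycles with fewer than two common vertices when $G_\sigma - e$ has at least two components, i.e.\ when $e$ is an isthmus; the paper does not isolate or treat the one-component case at all, so its proof is tacitly the isthmus argument, which you reproduce.

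Your caution about the remaining sub-case ($e$ neither an isthmus nor a loop) is well placed: the statement is actually \emph{false} there, so no argument can close that gap. Take $V=\{a,b,c,d\}$ with edges $ab,bc,ca,bd,cd$ and $\sigma(ab)=\sigma(bd)=-1$, all other signs $+1$. The only negative cycles are the triangles $abc$ and $bcd$, and they share the two vertices $b,c$; hence $G_\sigma$ is quasibalanced (and, being connected and unbalanced, sign connected). With $e=ab$ the graph $G_\sigma-e$ is connected, has four vertices, and still contains the negative triangle $bcd$, so by Theorem~\ref{31} its unique component is sign connected with more than one vertex; the hypothesis holds, yet $G_\sigma$ is quasibalanced. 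Thus the proposition should be read under the implicit assumption that $e$ is an isthmus (as in the neighbouring Theorem~\ref{34}); under that reading your proof and the paper's coincide.
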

\begin{proof}
Since $G_\sigma$ is sign connected, it is connected.  
If for some edge $e$ of $G_{\sigma}$ the connected components of $G_{\sigma} - e$ are sign connected, then each one is unbalanced since we assumed it has more than one vertex or supports a negative loop.  
This implies that each connected component admits a negative cycle, thus $G_{\sigma}$ is not quasibalanced.
\end{proof}

\begin{proposition}\label{33}
In a sign-connected signed graph, every frame or lift isthmus $e$ is a sign isthmus, except that when $|V|=1$, $e$ is a negative loop, and $G_\sigma$ contains no other negative loop, then $e$ is not a sign isthmus.
\end{proposition}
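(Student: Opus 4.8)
The plan is to split on whether $|V|>1$ or $|V|=1$, reducing everything to the characterizations already established. When $|V|>1$, Theorem \ref{31} tells us that $G_\sigma$ is connected and unbalanced, so I would read off from Lemma \ref{17} that a frame isthmus of $G_\sigma$ is either an isthmus of $G_\sigma$ — the first alternative of that lemma being vacuous here, since $G_\sigma$ has no balanced connected component — or a balancing edge of $G_\sigma$, and likewise that a lift isthmus of $G_\sigma$ is an isthmus or a balancing edge. Proposition \ref{36}, which characterizes the sign isthmi of a sign-connected graph on more than one vertex as exactly the isthmi together with the balancing edges, then finishes this case immediately.

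When $|V|=1$, $G_\sigma$ is a single vertex carrying loops; it is sign connected automatically, and for every edge $e$ the graph $G_\sigma-e$ again has just one vertex and so is sign connected, whence no edge of $G_\sigma$ is a sign isthmus at all. It remains only to see that the frame and lift isthmi in this situation are precisely the negative loops $e$ for which $G_\sigma$ has no other negative loop — which is exactly the stated exception. A positive loop is a frame circuit and a lift circuit of type (i) all by itself, so by Remark \ref{19} it is never a frame or lift isthmus. A negative loop $e$ cannot be an isthmus, since deleting a loop does not change the number of connected components; hence by Lemma \ref{17} such an $e$ is a frame (equivalently, lift) isthmus if and only if it is a balancing edge, i.e.\ if and only if $G_\sigma-e$ is balanced, i.e.\ if and only if $G_\sigma$ contains no further negative loop. (One could equally well argue straight from Definition \ref{3}: a negative loop lies in a frame or lift circuit precisely when there is a second negative loop, which together with it forms a circuit of type (ii).)

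The argument is short and is essentially a matter of quoting Theorem \ref{31}, Lemma \ref{17}, and Proposition \ref{36}; the only place that demands a little care is the bookkeeping when $|V|=1$, where one must keep positive loops apart from negative loops and recall that a loop is never an isthmus of the underlying graph — so in that degenerate case a matroid isthmus can arise only as a balancing edge, and deleting it cannot destroy sign connection.
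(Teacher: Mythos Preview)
Your proof is correct and follows essentially the same route as the paper: both arguments reduce to Theorem~\ref{31} and Lemma~\ref{17}, showing that a frame or lift isthmus in a connected unbalanced graph is an isthmus or a balancing edge, hence a sign isthmus when $|V|>1$. You organize the case split by $|V|$ first and cite Proposition~\ref{36} explicitly, and you give a more careful account of the $|V|=1$ case (separating positive from negative loops) than the paper does, but the substance is the same.
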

\begin{proof}
By Theorem \ref{31}, $G_\sigma$ is connected.  Let $e$ be a frame isthmus in $G_{\sigma}$.  
If $e$ is an isthmus of $G_{\sigma}$, it is a sign isthmus of $G_{\sigma}$.
If $e$ is not an isthmus of $G_{\sigma}$, then it is a balancing edge of $G_\sigma$.  If $|V|>1$, this implies that $G_\sigma$ is not sign connected, from which the edge $e$ is a sign isthmus.  If $|V|=1$, $e$ is a negative loop and  $G_{\sigma}-e$ is sign connected, so $e$ is not a sign isthmus.

Let $e$ be a lift isthmus.  If it is either an isthmus or a balancing edge of $G_{\sigma}$, it is a sign isthmus if $|V|>1$ but not if $|V|=1$ for the same reason as above.
\end{proof}

\begin{remark}\label{sign-frame-isthmus}
{\rm   
A sign isthmus that is not a negative loop is not always a frame isthmus.  Let $G_\sigma$ be connected with an isthmus $e$ such that $G_\sigma-e$ has two unbalanced components.  Then $e$ is a sign isthmus but not a frame isthmus.
However, by Proposition \ref{36} and Lemma \ref{17} every sign isthmus is a lift isthmus.
}
\end{remark}

\begin{theorem}\label{34}
Let $G_{\sigma}$ be a sign-connected signed graph and $e$ a sign isthmus that is an isthmus. The connected components of $G_{\sigma} - e$ are sign connected if and only if the edge $e$ is not a frame isthmus.
\end{theorem}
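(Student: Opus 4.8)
The plan is to translate both sides of the stated equivalence into a single condition on the two pieces of $G_\sigma-e$, namely that neither piece is balanced, using Theorem~\ref{31} for sign connection and Lemma~\ref{17} for frame isthmi. For the setup: since $G_\sigma$ is sign connected and $e$, being an isthmus, is not a loop, $G_\sigma$ has at least two vertices, so by Theorem~\ref{31} it is connected and unbalanced; and since $e$ is an isthmus, $G_\sigma-e$ has exactly two connected components $G_1$ and $G_2$, with $e$ joining a vertex of $G_1$ to a vertex of $G_2$.

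First I would determine when $e$ is a frame isthmus. As $G_\sigma$ is a single, unbalanced connected component, Lemma~\ref{17} leaves only two possibilities: that $e$ is a balancing edge of $G_\sigma$, or that $e$ is an isthmus of $G_\sigma$ one of whose two resulting components is balanced. The first is impossible, since a balancing edge is never an isthmus (Proposition~\ref{p:baledge}). Hence $e$ is a frame isthmus if and only if at least one of $G_1,G_2$ is balanced.

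Next I would determine when the components are sign connected, and conclude. Each $G_i$ is connected, so by Theorem~\ref{31} it is sign connected precisely when it is unbalanced (ignoring for the moment the one-vertex alternative of that theorem). Therefore both $G_1$ and $G_2$ are sign connected if and only if both are unbalanced, which by the previous paragraph is the same as saying that $e$ is not a frame isthmus; that is the asserted equivalence.

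The step I expect to be the obstacle is exactly the one-vertex alternative in Theorem~\ref{31}: if some $G_i$ is a single vertex supporting no negative loop, then $G_i$ is sign connected yet balanced, so $e$ is a frame isthmus even though both components of $G_\sigma-e$ are sign connected, which strains the ``only if'' direction as literally phrased. One handles this by reading ``sign connected'' in the statement together with the proviso ``has more than one vertex or supports a negative loop'', as in Proposition~\ref{35} --- which for a connected signed graph is equivalent to being unbalanced --- or simply by restricting to components with at least two vertices. With that understanding, the remainder is the routine combination of Theorem~\ref{31}, Lemma~\ref{17}, and the observation that an isthmus is never a balancing edge.
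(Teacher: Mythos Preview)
Your argument is correct and parallels the paper's: both reduce the question to whether the two components of $G_\sigma - e$ are each unbalanced. Where the paper shows directly that two unbalanced components put $e$ into a Type~(iii) frame circuit, you invoke Lemma~\ref{17}; these are equivalent moves.

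The substantive difference concerns the one-vertex case, and here you are more careful than the paper. The paper handles the situation where one component of $G_\sigma - e$ is a single balanced vertex $v$ by asserting that $v$ would then fail to be sign connected to the rest of $G_\sigma$, contradicting the hypothesis. But that assertion is not correct: if $e$ is a pendant edge attaching $v$ to an unbalanced graph, then $v$ \emph{is} sign connected to every other vertex through $e$ and the negative cycle on the other side, so $G_\sigma$ is genuinely sign connected, both components of $G_\sigma - e$ are sign connected, yet $e$ is a frame isthmus. Your diagnosis --- that the ``only if'' direction needs the extra proviso of Proposition~\ref{35}, or a restriction to components with at least two vertices --- is the right repair. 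So keep your argument as written; the caveat you flag is not a defect of your proof but of the statement itself.
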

\begin{proof}
A connected component of $G_{\sigma} - e$ that is sign connected is unbalanced or it is balanced and has only one vertex.  If both connected components of $G_{\sigma} - e$ are unbalanced, then $e$ is in a frame circuit of Type (iii) so it is not a frame isthmus.  If one of them is balanced with vertex $v$, then $v$ is not sign connected to the rest of $G_\sigma$, contrary to hypothesis, so this is impossible.
\end{proof}

\begin{remark}\label{sign isth}
{\rm 
Under the hypotheses of Theorem \ref{34}, the connected components of $G_{\sigma} - e$ are sign connected if and only if they are both unbalanced.  This is demonstrated by the same proof.
}
\end{remark}

	\section{Signed graphs without positive cycles}\label{nopos}

In this section we specialize to signed graphs that have no positive cycles.  

\begin{definition}[{\cite[Example 6.2]{z1}}]\label{contra}
{\rm 
A signed graph without positive cycles is called \emph{contrabalanced}.  
}
\end{definition}

\begin{definition}\label{cactusforest}
{\rm 
A \emph{cactus forest} is a graph in which every block is a cycle, an isthmus, or $K_1$.  

A \emph{theta graph} is a graph that consists of three elementary chains that are disjoint except that they all have the same two end vertices.
}
\end{definition}

\begin{theorem}\label{cacti}
A signed graph has no positive cycles if and only if it is a contrabalanced cactus forest.
\end{theorem}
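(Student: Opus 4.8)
The proof splits into two implications, one of which is immediate and the other of which rests on a single classical graph-theoretic fact. For the easy direction: if $G_\sigma$ is a contrabalanced cactus forest, then by definition it has no positive cycles, since ``contrabalanced'' is defined (Definition \ref{contra}) to mean exactly that. So the whole content lies in showing that if $G_\sigma$ has no positive cycle, then its underlying graph $G$ is a cactus forest, i.e., every block of $G$ is a cycle, an isthmus, or $K_1$; the ``contrabalanced'' half of the conclusion is again simply the hypothesis.

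The key observation is that a theta graph always contains a positive cycle, whatever its signature. If a signed theta graph consists of three internally disjoint chains $P_1,P_2,P_3$ joining two vertices, its three cycles are the $P_i\cup P_j$, with signs $\sigma(P_i)\sigma(P_j)$, and the product of these three signs is $(\sigma(P_1)\sigma(P_2)\sigma(P_3))^2=+1$. Hence an even number of the three cycles are negative, so at least one of them is positive.

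Now suppose $G_\sigma$ has no positive cycle but $G$ is not a cactus forest. Then $G$ has a block $B$ that is not $K_1$, not a single edge, and not a cycle. Such a block has at least two edges, contains no bridge (otherwise an endpoint of the bridge would be a cut vertex of $B$), hence is $2$-edge-connected and contains a cycle. A standard fact — provable by ear decomposition — is that a $2$-connected graph with at least two edges that is not itself a single cycle contains a theta subgraph: starting from a cycle in $B$ and adjoining one ear yields three internally disjoint chains between the ear's endpoints. (Loops and parallel edges cause no trouble: a block containing a loop is just that loop, which is a cycle; a digon is a cycle; three parallel edges already form a theta.) By the previous paragraph this theta subgraph, and hence $G_\sigma$, contains a positive cycle, contradicting contrabalance. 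Therefore every block of $G$ is a cycle, an isthmus, or $K_1$, so $G_\sigma$ is a contrabalanced cactus forest.

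I expect the only genuine ingredient to be the classical statement that a $2$-connected graph other than a cycle contains a theta subgraph; the write-up can either cite it from a standard reference such as \cite{bb} or establish it in one line via the ear decomposition of $B$ sketched above. Everything else is bookkeeping, the only care needed being the handling of multigraph features (loops and multiple edges) in the notion of ``block'' and ``cycle.''
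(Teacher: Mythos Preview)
Your proof is correct and follows essentially the same approach as the paper's: both hinge on the fact that every signed theta graph contains a positive cycle, and conclude that a contrabalanced signed graph can have no theta subgraph and hence must be a cactus forest. The paper cites \cite[Theorem~6]{zc} for the theta fact (stated there as ``every signed theta graph contains an odd number of positive cycles''), whereas you supply the short parity argument directly, and you also make explicit the ``non-cycle block contains a theta'' step that the paper leaves implicit.
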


(We saw this theorem stated independently in a recently published or submitted paper, but we do not recall which paper.)

\begin{proof}
A signed graph $G_\sigma$ contains a positive cycle if contains a theta graph, because every theta graph in a signed graph contains an odd number of positive cycles \cite[Theorem 6]{zc}.  If $G_\sigma$ is contrabalanced, therefore every block is an isthmus, a cycle, or $K_1$.  
\end{proof}

\begin{theorem}\label{sign-contra}
Let $G_{\sigma}$ be a connected, contrabalanced signed graph with at least two vertices.  Then $G_{\sigma}$ is sign connected if and only if it has a cycle.

If $G_\sigma$ has exactly one cycle, then every edge is a sign isthmus.  If $G_\sigma$ has at least two cycles, an edge is a sign isthmus if and only if it is an isthmus.
\end{theorem}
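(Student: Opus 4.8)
The plan is to exploit Theorem~\ref{cacti}, which tells us that a connected contrabalanced signed graph is a cactus forest, so here it is a connected \emph{cactus}: every block is an edge (isthmus), a cycle, or $K_1$, and since $G_\sigma$ has at least two vertices there are no $K_1$ blocks except possibly none at all. Note also that in a contrabalanced graph every cycle is negative. For the first assertion, if $G_\sigma$ has no cycle it is a tree, hence balanced, hence by Theorem~\ref{31} not sign connected (as $|V|>1$); conversely, if $G_\sigma$ has a cycle $C$, then $C$ is negative, so $G_\sigma$ is connected and unbalanced, hence sign connected by Theorem~\ref{31}. That disposes of the first paragraph of the statement.

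For the isthmus claims I would use Proposition~\ref{36}: in a sign-connected signed graph with $|V|>1$, an edge is a sign isthmus if and only if it is an isthmus or a balancing edge. So the task reduces to identifying the balancing edges of $G_\sigma$. By Proposition~\ref{p:baledge}, an edge $e$ is a balancing edge iff it lies on every negative cycle and on no positive cycle; since $G_\sigma$ is contrabalanced there are no positive cycles, so $e$ is a balancing edge iff it lies on every cycle of $G_\sigma$. Now if $G_\sigma$ has exactly one cycle $C$: the edges of $C$ each lie on every (i.e.\ the unique) cycle, so they are balancing edges and hence sign isthmi; and the edges not on $C$ are isthmi of the graph (they lie in edge-blocks of the cactus), hence sign isthmi. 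Thus every edge is a sign isthmus. If $G_\sigma$ has at least two cycles $C_1\neq C_2$: since in a cactus two distinct cycles lie in distinct blocks and share at most one vertex, no edge lies on both, so no edge lies on every cycle, hence there are no balancing edges; by Proposition~\ref{36}, an edge is a sign isthmus iff it is an isthmus. This handles the second paragraph.

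One small point to be careful about is why an edge of the cactus that does not lie on any cycle is genuinely an isthmus of $G_\sigma$: such an edge constitutes a block by itself (a cactus forest has blocks that are cycles, isthmi, or $K_1$, and a non-cycle-edge in a connected cactus with $\ge 2$ vertices forms an isthmus block), so removing it disconnects $G_\sigma$; this is immediate from the block structure and needs only a sentence. A second point is to confirm that when $G_\sigma$ has exactly one cycle, that cycle is negative (so that the relevant cycle-edge argument via Proposition~\ref{p:baledge} applies and $G_\sigma$ is indeed unbalanced); this again follows from contrabalance.

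I do not expect a serious obstacle here: everything is a straightforward combination of Theorem~\ref{cacti}, Theorem~\ref{31}, Proposition~\ref{36}, and the balancing-edge characterization in Proposition~\ref{p:baledge}. The only place demanding slight care is the bookkeeping of which edges lie on cycles versus which are isthmi, and checking the degenerate situation where $G_\sigma$ is a single cycle (then it has ``exactly one cycle'' and indeed every edge is a balancing edge, hence a sign isthmus, consistent with the claim).
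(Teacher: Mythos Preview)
Your proof is correct and follows essentially the same logic as the paper's: both reduce sign connection to unbalancedness via Theorem~\ref{31}, and both handle sign isthmi by checking whether $G_\sigma - e$ remains unbalanced and connected. The only difference is cosmetic: the paper argues directly that $G_\sigma - e$ is unbalanced (when there are at least two cycles) or balanced/disconnected (when there is exactly one), whereas you route the same case analysis through Proposition~\ref{36} and the balancing-edge criterion of Proposition~\ref{p:baledge}, invoking the cactus structure of Theorem~\ref{cacti} more explicitly.
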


\begin{proof}
\emph{Sign connection}:  The signed graph $G_\sigma$ is balanced if and only if it has no cycles.  By Theorem \ref{31} it is sign connected if and only if it is unbalanced.

\emph{Sign isthmi}:  If $G_\sigma$ has any cycles, then it is unbalanced, so by Proposition \ref{36} it is sign connected.  Let $e$ be an edge of $G_\sigma$.  
If $G_\sigma$ has at least two cycles, $G_\sigma-e$ is unbalanced; therefore, it is sign connected if and only if it is connected.  Thus, $e$ is a sign isthmus if and only if it is an isthmus.  
If $G_\sigma$ has exactly one cycle, then it is unbalanced but $G_\sigma-e$ is either balanced, if $e$ is in the cycle, or disconnected, if $e$ is not in the cycle.  Thus, every edge is a sign isthmus.
\end{proof}

For contrast we provide the similar results for frame and lift connection.  Note that 

\begin{theorem}\label{25}
Let $G_{\sigma}$ be a connected, contrabalanced signed graph with at least two edges.  The following properties are equivalent:
\begin{enumerate}[{\rm (1)}]
\item $G_{\sigma}$ is frame connected.
\item $G_{\sigma}$ contains no frame isthmus.
\item $G_{\sigma}$ has at least two cycles and no pendant edge.
\end{enumerate}
If $G_\sigma$ does not have these properties, then every edge is a frame isthmus.
\end{theorem}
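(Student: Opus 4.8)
The plan is to work entirely with the block structure of a contrabalanced cactus. By Theorem~\ref{cacti}, $G_{\sigma}$ is a connected contrabalanced cactus, so each of its blocks is a negative cycle or an isthmus (there is no $K_1$ block, since $G_\sigma$ has an edge); a subgraph of $G_\sigma$ is balanced exactly when it is a forest, so $G_\sigma$ is unbalanced iff it has a cycle, and each edge lies in at most one cycle, namely the unique cycle-block containing it (if any). I will use Lemma~\ref{17} for frame isthmi and, for frame connectivity, Lemma~\ref{13} together with the list of frame circuits in Definition~\ref{3}.

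For $(1)\Rightarrow(2)$: a frame isthmus is a coloop of $M(G_\sigma)$, hence a one-element component, so if $G_\sigma$ (which has at least two edges) had one, $M(G_\sigma)$ would be disconnected. Next I prove $(2)\Leftrightarrow(3)$ through Lemma~\ref{17}. For the contrapositive of $(2)\Rightarrow(3)$, suppose $(3)$ fails: if $G_\sigma$ has a pendant edge $e$ then $G_\sigma-e$ has a one-vertex (hence balanced) component, so $e$ is a frame isthmus; if $G_\sigma$ has no cycle it is a tree and each edge is an isthmus of a balanced component, hence a frame isthmus; if $G_\sigma$ has exactly one cycle $C$ then every $e\in C$ is a balancing edge (as $G_\sigma-e$ is connected and acyclic), hence a frame isthmus. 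Conversely, assume $(3)$. Then $G_\sigma$ is connected and unbalanced, so the ``isthmus of a balanced component'' clause of Lemma~\ref{17} is vacuous; $G_\sigma$ has no balancing edge, since deleting an edge destroys at most its unique cycle-block while at least two remain; and if $e$ were an isthmus with a balanced side $T$ in $G_\sigma-e$, then $T$ is a tree which is either a single vertex --- making $e$ pendant --- or has two or more leaves, one of them a degree-$1$ vertex of $G_\sigma$ distinct from the endpoint of $e$, again a pendant edge; either case contradicts $(3)$. By Lemma~\ref{17}, $G_\sigma$ has no frame isthmus, so $(2)$ holds.

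For $(2)\Rightarrow(1)$, use $(3)$, now known equivalent. Then every leaf block of the block--cut tree of $G_\sigma$ is a negative cycle (a leaf isthmus block would be a pendant edge), so each component obtained from $G_\sigma$ by deleting one or two isthmi still contains a leaf cycle-block. By Lemma~\ref{13} it suffices to place any two distinct edges $e,f$ in a common frame circuit. If neither is an isthmus, take their cycle-blocks $C\ni e$ and $C'\ni f$, replacing $C'$ by another cycle-block if $C=C'$ (possible since there are at least two cycles); joining $C$ and $C'$ by the canonical path between them in the block--cut tree --- of length $0$ if they meet, otherwise an elementary chain $Q$ internally disjoint from both --- gives a frame circuit $C\cup C'\cup Q$ of type (ii) or (iii) whose edge set contains all of $C$ and $C'$, hence $e$ and $f$. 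If instead (say) $e$ is an isthmus, it splits $G_\sigma-e$ into two components; let $C'$ be a leaf cycle-block in the component not containing $f$, and let $C$ be the cycle-block containing $f$ if $f$ lies in one, or else a leaf cycle-block in the extreme component of $G_\sigma-\{e,f\}$ beyond $f$. Then $C$ and $C'$ are vertex-disjoint negative cycles on opposite sides of the cut $e$, so the canonical path $Q$ between them is an elementary chain internally disjoint from both that traverses $e$ (and traverses $f$ in the second subcase), so $C\cup C'\cup Q$ is a type~(iii) frame circuit containing $e$ and $f$. Hence $M(G_\sigma)$ is connected.

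For the closing assertion, the case where $(1)$--$(3)$ fail because $G_\sigma$ has at most one cycle follows from the computation in $(2)\Rightarrow(3)$: every edge is then a balancing edge, or an isthmus one of whose sides in $G_\sigma-e$ is balanced, hence a frame isthmus by Lemma~\ref{17}. I expect the main obstacle to lie in the isthmus case of $(2)\Rightarrow(1)$: one has to route an elementary chain through a prescribed isthmus (or two) while keeping it internally disjoint from two selected negative cycles. The hypothesis ``no pendant edge'' is exactly what makes each side of an isthmus reach a leaf cycle-block, and the delicate point is to choose those cycles far enough out in the cactus and to traverse the block--cut path so that the chain stays simple and never re-enters the chosen cycles.
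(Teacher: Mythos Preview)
Your proof of the equivalence $(1)\Leftrightarrow(2)\Leftrightarrow(3)$ is correct. For $(3)\Rightarrow(1)$ the paper takes a shorter structural route: since every block of the cactus is either a negative cycle or an isthmus on a path between two negative cycles, every block is an inner block, so the core is all of $G_\sigma$ and is not a necklace of balanced blocks; Lemma~\ref{f-comp} then gives frame connectivity in one stroke. Your approach via Lemma~\ref{13}, building an explicit frame circuit through any two prescribed edges, is more elementary and avoids invoking the classification of frame components, at the cost of some bookkeeping in the block--cut tree. (Your claim that each side of an isthmus contains a cycle-block is correct: a side without a cycle would be a tree with at least two leaves, one of which is not the endpoint of the deleted isthmus and hence is pendant in $G_\sigma$.)

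There is a genuine gap in your treatment of the closing assertion: you cover only the subcase where $(3)$ fails because $G_\sigma$ has at most one cycle, and say nothing about the subcase where $G_\sigma$ has at least two cycles but also a pendant edge. In fact that second subcase cannot be completed, because it is a counterexample to the assertion as stated: two negative triangles sharing a vertex, together with a single pendant edge, form a connected contrabalanced graph in which $(3)$ fails, yet every triangle edge lies in a type~(ii) frame circuit and so is not a frame isthmus. The paper's own argument for this clause (``if $(3)$ is not true, then there are no frame circuits'') has the same defect --- it is only valid when there is at most one cycle --- so the closing assertion should really read ``if $G_\sigma$ has at most one cycle, then every edge is a frame isthmus'', which is precisely what you proved.
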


\begin{proof}
(1) $\Rightarrow$ (2):  
Since $G_{\sigma}$ is frame connected and has at least two edges, it cannot have a frame isthmus.

(2) $\Rightarrow$ (3):  
Since $G_\sigma$ has no frame isthmus, every edge is in a frame circuit of type (ii) or (iii).  It follows that $G_\sigma$ has at least two cycles and no pendant edges.

(3) $\Rightarrow$ (1):
Every block of $G_\sigma$ is either a negative cycle or an isthmus on a path between two negative cycles.  Therefore, $M(G_\sigma)$ is connected, by Lemma \ref{f-comp}.

\emph{Frame isthmi}:  If (3) is not true, then there are no frame circuits so every edge is a frame isthmus, by Lemma \ref{f-comp}.
\end{proof}

\begin{theorem}\label{liftcactus}
Let $G_{\sigma}$ be a contrabalanced signed graph with at least two edges and without isolated vertex.  The following properties are equivalent:
\begin{enumerate}[{\rm (1)}]
\item $G_{\sigma}$ is lift connected.
\item $G_{\sigma}$ contains no lift isthmus.
\item $G_{\sigma}$ has at least two cycles and no isthmus.
\end{enumerate}
If $G_\sigma$ does not have at least two cycles, then every edge is a lift isthmus.
\end{theorem}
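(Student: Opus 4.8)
The plan is to follow the template of Theorem~\ref{25}, but this time to argue directly with lift circuits rather than with frame components, since that is the cleanest route for the lift matroid. The structural input I would set up first comes from Theorem~\ref{cacti}: $G_\sigma$ is a contrabalanced cactus forest, so every cycle of $G_\sigma$ is negative and, since distinct blocks of a graph meet in at most one vertex while every cycle of a cactus forest is itself a block, any two distinct cycles of $G_\sigma$ are either vertex-disjoint or share exactly one vertex. The key observation is then that, for a contrabalanced $G_\sigma$, the lift circuits are exactly the sets $C\cup C'$ with $C,C'$ two distinct cycles: there are no type~(i) lift circuits (no positive cycles), a type~(ii) or (iii$'$) lift circuit is visibly such a union, and conversely $C\cup C'$ is a lift circuit of type~(ii) when $C,C'$ meet in a vertex and of type~(iii$'$) when they are disjoint. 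Throughout I would use Lemma~\ref{13} (lift connected $\Leftrightarrow$ every two distinct edges lie in a common lift circuit) and Remark~\ref{19} (an edge is a lift isthmus $\Leftrightarrow$ it lies in no lift circuit).

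I would then run the cycle (1)$\Rightarrow$(2)$\Rightarrow$(3)$\Rightarrow$(1). For (1)$\Rightarrow$(2): if $G_\sigma$ is lift connected and $|E|\ge 2$, then $L(G_\sigma)$ is a connected matroid with at least two elements, so by Lemma~\ref{13} every edge lies in a lift circuit, hence by Remark~\ref{19} there is no lift isthmus. For (2)$\Rightarrow$(3): if there is no lift isthmus, then by Remark~\ref{19} every edge lies in a lift circuit; by the key observation such a circuit is a union of two distinct cycles, so $G_\sigma$ has at least two cycles, and since an isthmus lies in no cycle it lies in no lift circuit, so $G_\sigma$ has no isthmus. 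For (3)$\Rightarrow$(1): under~(3), $G_\sigma$ is a cactus forest with no isolated vertex and no isthmus, hence every block is a cycle and every edge lies in a cycle; given distinct edges $e\in C$ and $e'\in C'$, I take the lift circuit $C\cup C'$ when $C\ne C'$, and when $C=C'$ I use the hypothesis ``at least two cycles'' to choose a cycle $C''\ne C$ and take $C\cup C''$, in either case obtaining a lift circuit through both edges, so $G_\sigma$ is lift connected by Lemma~\ref{13}. As an alternative for (3)$\Rightarrow$(1), one can quote Lemma~\ref{f-comp} directly: under~(3) every block is an unbalanced cycle, there are at least two unbalanced blocks so the necklace exception does not apply, and the union of all unbalanced blocks is all of $G_\sigma$, a single lift component.

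For the final assertion, suppose $G_\sigma$ has at most one cycle. Then $G_\sigma$ has no pair of distinct cycles, hence no lift circuit of type~(ii) or (iii$'$), and none of type~(i) either since $G_\sigma$ is contrabalanced; so $G_\sigma$ has no lift circuit at all, and by Remark~\ref{19} every edge is a lift isthmus.

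I expect the only point needing real care to be the structural fact that two distinct cycles of a contrabalanced signed graph meet in at most one vertex: this is exactly what makes $C\cup C'$ a genuine lift circuit of type~(ii) or (iii$'$) in every case, with no overlapping-cycles obstruction, and it is what Theorem~\ref{cacti} supplies, the cactus property coming from the absence of a theta subgraph (cf.\ \cite{zc}). Everything else is bookkeeping; the one spot worth flagging is the $C=C'$ sub-case of (3)$\Rightarrow$(1), where ``at least two cycles'' is used in an essential way.
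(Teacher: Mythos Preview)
Your proof is correct and follows essentially the same route as the paper: the cycle (1)$\Rightarrow$(2)$\Rightarrow$(3)$\Rightarrow$(1) via Lemma~\ref{13} and Remark~\ref{19}, with the key point that in a contrabalanced graph the lift circuits are exactly the unions of two distinct (negative) cycles. You are more explicit than the paper about the cactus structure from Theorem~\ref{cacti} (ensuring two distinct cycles really do form a type~(ii) or~(iii$'$) circuit), about the $C=C'$ sub-case in (3)$\Rightarrow$(1), and about why every edge lies in a cycle under~(3); for the final assertion you argue directly that there are no lift circuits at all, whereas the paper invokes Lemma~\ref{17}, but these are minor variations on the same argument.
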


\begin{proof}
(1) $\Rightarrow$ (2):  
Since $G_{\sigma}$ is lift connected and has at least two edges, it cannot have a lift isthmus.

(2) $\Rightarrow$ (3):  
Since $G_\sigma$ has no lift isthmus, every edge is in a lift circuit of type (ii) or (iii$'$).  It follows that $G_\sigma$ has at least two cycles and no isthmus.

(3) $\Rightarrow$ (1):
Every pair of cycles forms a lift circuit of type (ii) or (iii$'$).  Therefore, every pair of edges is in a lift circuit, so $L(G_\sigma)$ is connected, by Lemma \ref{13}.

\emph{Lift isthmi}:  Every isthmus is a lift isthmus by Lemma \ref{17}.  If $G_\sigma$ has only one cycle, every edge of that cycle is a balancing edge and therefore is a lift isthmus by Lemma \ref{17}.
\end{proof}

	\section{Signed graphs where every edge is negative}\label{anti}

Let $G_\sigma$ be a signed graph such that every edge has sign $-1$.  Then a chain is positive if it has even length and negative if it has odd length, so two vertices are sign connected if and only if they are joined by both even and odd chains.  The essence of this signature is the property of antibalance s(uggested by Harary \cite{ha}).

\begin{definition}\label{antibal}
{\rm 
A signed graph $G_\sigma$ is called \emph{antibalanced} if $G_{-\sigma}$ is balanced.  Equivalently, 
$G_\sigma$ switches to have all negative signs.  Also equivalently, all positive cycles have even length and all negative cycles have odd length.
}
\end{definition}

\begin{remark}\label{antibal-bal}
{\rm
An antibalanced graph $G_\sigma$ is balanced if and only if $G$ is bipartite.  Thus, antibalanced signed graphs are a signed generalization of bipartite graphs.
}
\end{remark}

Henceforth in this section we assume that all edges are negative.  A signed graph in which every edge is negative is denoted by $G_{-}$ and is called \emph{all negative}.

\begin{definition}\label{parityconn}
{\rm 
In an unsigned graph $G$, two vertices $v$ and $w$ are called \emph{parity connected} if there exist both an odd-length and an even-length chain between them.  Equivalently, $v$ and $w$ are sign connected in $G_{-}$.  The graph $G$ is called \emph{parity connected} if every two vertices are parity connected in $G$; equivalently, $G_{-}$ is sign connected.
}
\end{definition}

All our previous results apply to unsigned graphs $G$ by specializing to $G_{-}$, replacing ``positive'', ``negative'', and ``balanced'' by ``even'', ``odd'', and ``bipartite'', respectively.  Sign connection becomes parity connection.  Signed graphs without positive cycles become graphs with no even cycles.  The following results are examples of this substitution.

\begin{proposition}\label{40-}
A hypercyclic chain between $x$ and $y$ in an all-negative signed graph $G_{-}$ is any chain in $G$ between $x$ and $y$ that has the form in Figure \ref{FIG3} with a cycle of odd length.
\end{proposition}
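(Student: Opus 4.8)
The plan is to read this off directly from Definition \ref{6} and Proposition \ref{hypercyclic} via the general substitution rule already announced for all-negative signed graphs (``negative'' $\mapsto$ ``odd'', ``balanced'' $\mapsto$ ``bipartite''), the only point needing attention being that in $G_-$ the free sign labels $\alpha,\beta,\gamma,\lambda$ of Figure \ref{FIG3} are no longer free but are pinned down by lengths.

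First I would record the elementary fact that in an all-negative signed graph a cycle $C$ is negative if and only if its length $|E(C)|$ is odd, since $\sigma(C)=\prod_{e\in C}\sigma(e)=(-1)^{|E(C)|}$; likewise $\sigma(P)=(-1)^{|E(P)|}$ for any chain $P$. In particular the phrase ``contains a negative cycle'' in Definition \ref{6} becomes ``contains a cycle of odd length'' when the ambient signed graph is $G_-$.

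Next, for the forward implication, suppose $P$ is a hypercyclic $\varepsilon$-chain between $x$ and $y$ in $G_-$. By Proposition \ref{hypercyclic}, $G(P)$ is one of the three configurations of Figure \ref{FIG3}, with the distinguished cycle $C$ being the negative cycle it is required to contain; by the observation above, $C$ has odd length. Conversely, suppose $P$ is a chain between $x$ and $y$ in $G$ whose graph has one of the three forms of Figure \ref{FIG3} with the distinguished cycle of odd length. Since every edge has sign $-1$, that cycle is negative, and the ``correctness of the three types'' part of the proof of Proposition \ref{hypercyclic} --- which uses nothing about the signature beyond the stated signs of the cycles and chains --- shows that $P$ is an elementary $\varepsilon$-chain containing a negative cycle, hence hypercyclic, with $\varepsilon=\sigma(P)=(-1)^{|E(P)|}$.

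Finally I would remark that the sign labels $\alpha,\beta,\gamma,\lambda\in\{-1,+1\}$ appearing in Figure \ref{FIG3} are, in the all-negative case, determined by the parities of the lengths of the corresponding subchains and of $C$, and that the sole binding constraint among them is that $C$ be negative, i.e.\ of odd length; all the remaining subchain lengths are unconstrained. Hence no condition survives the translation except the odd length of the cycle, which is exactly what the proposition asserts. There is no real obstacle here: the content is entirely a specialization of Proposition \ref{hypercyclic}, and the only thing to be careful about is confirming that its ``correctness'' direction applies verbatim to the signature of $G_-$ and that each figure is trivially realizable once all signs are $-1$.
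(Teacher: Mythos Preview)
Your proposal is correct and follows essentially the same approach as the paper: the paper's proof simply observes that a negative cycle in $G_-$ is an odd cycle and then declares the rest to be a restatement of Definition \ref{6} and Proposition \ref{hypercyclic}. You have merely unpacked that restatement more carefully (spelling out both implications and the role of the sign labels), which is fine but not a different route.
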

\begin{proof}
A hypercyclic $\varepsilon$-chain contains a negative cycle, which in $G_\sigma$ means an odd cycle.  The rest is a restatement of Definition \ref{6} and Proposition \ref{hypercyclic}.
\end{proof}

\begin{theorem}\label{41-}
A connected unsigned graph with at least two vertices is parity connected if and only if it is not bipartite.
\end{theorem}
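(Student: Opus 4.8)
The plan is to obtain this theorem as the all-negative specialization of Theorem \ref{31}, using the dictionary ``positive / negative / balanced'' $\mapsto$ ``even / odd / bipartite'' set up at the beginning of this section, together with Definition \ref{parityconn}, which says that $G$ is parity connected exactly when $G_{-}$ is sign connected.

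First I would record the translation of balance. Since every edge of $G_{-}$ is negative, a cycle is positive precisely when it has an even number of negative edges, i.e.\ an even number of edges; hence $G_{-}$ is balanced if and only if every cycle of $G$ has even length, that is, if and only if $G$ is bipartite. (This also follows from Remark \ref{antibal-bal}, because $G_{-}$ is antibalanced.) Then I would apply Theorem \ref{31} to $G_\sigma = G_{-}$: as $|V|\ge 2$, the degenerate alternative $|V|=1$ is excluded, so $G_{-}$ is sign connected if and only if $G$ is connected and $G_{-}$ is unbalanced. Since $G$ is assumed connected, this chain of equivalences reads $G$ is parity connected $\iff$ $G_{-}$ is sign connected $\iff$ $G_{-}$ is unbalanced $\iff$ $G$ is not bipartite, which is the claim.

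There is essentially no obstacle; the only points needing care are the balance-to-bipartite translation and the remark that the $|V|=1$ case of Theorem \ref{31} does not arise under the hypothesis. Should a self-contained argument be preferred, one could instead reprove the two directions directly: if $G$ is connected and bipartite, all chains joining a fixed pair of vertices have the same parity, so $G$ is not parity connected; if $G$ is connected and not bipartite it contains an odd cycle $C$, and for any $x,y$ one joins $x$ and $y$ to $C$ by paths and then either forms a hypercyclic chain and invokes Lemma \ref{29}, or traverses $C$ in the two directions, to produce chains of both parities. This, however, merely recapitulates the proof of Theorem \ref{31}, so citing that theorem is the shortest route.
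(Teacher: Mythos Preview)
Your proposal is correct and follows essentially the same route as the paper: translate parity connection of $G$ to sign connection of $G_{-}$ via Definition~\ref{parityconn}, translate bipartiteness of $G$ to balance of $G_{-}$, and then invoke Theorem~\ref{31}. You supply a bit more detail than the paper (the explicit balance-to-bipartite justification and the remark that the $|V|=1$ alternative is excluded), but the argument is the same.
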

\begin{proof}
Let $G$ be the connected graph.  It is parity connected if and only if $G_{-}$ is sign connected and $G$ is not bipartite if and only if $G_{-}$ is unbalanced.  By Theorem \ref{31}, $G_{-}$ is sign connected if and only if it is unbalanced.
\end{proof}

	\section{Extensions}\label{open}

	\subsection{Signed path connection}\label{signpath}

An unsigned graph is connected if every pair of vertices are joined by a chain, or equivalently by a path (an elementary chain).  In sign connection chains cannot be replaced by paths.  Define signed path connection as the existence of paths of both signs between a pair of vertices.  It is is a nontransitive relation on the set of vertices; therefore it is necessary to take the transitive closure to get an equivalence relation.  Since the transitive closure of path connection is connection by chains, it follows that signed path connection is not intrinsically interesting.

	\subsection{Positive and negative connection}\label{posneg}

Instead of asking for chains of both signs, we might ask only for chains of one fixed sign.

\begin{definition}\label{posneg-conn}
{\rm 
A signed graph $G_\sigma$ is \emph{positively connected} if every pair of distinct vertices is joined by a chain of positive sign.  The \emph{positively connected components} of $G_\sigma$ are the maximal positively connected subgraphs of $G_\sigma$.

Vertices $u,v$ in $G_\sigma$ are \emph{negatively connected} if they are joined by a negative chain.  The relation of being negatively connected is not transitive, in general.  Therefore, we define a \emph{negatively connected component} of $G_\sigma$ to be a maximal subgraph in which every pair of vertices, $u$ and $v$, is negatively connected, or has a third vertex $w$ such that $u$ and $v$ are negatively connected to $w$.
}
\end{definition}

\begin{remark}\label{posneg-sw}
{\rm 
These notions of connection are not invariant under switching.  Therefore, they are intrinsically different from sign connection.  However, they are closely related to it.
}
\end{remark}

\begin{proposition}\label{posneg-comp}
The positively connected components of $G_\sigma$ are the  unbalanced connected components of $G_\sigma$, the subgraphs generated by the sets of the Harary bipartition of each balanced connected component in which not all edges are positive, and the connected components in which all edges are positive.

The negatively connected components of $G_\sigma$ are the unbalanced connected components of $G_\sigma$, the balanced connected components that are not all positive, and the separate vertices of the connected components in which all edges are positive.
\end{proposition}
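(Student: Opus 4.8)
The plan is to treat the positive and the negative statements separately, and in each case to argue one connected component of the underlying graph $G$ at a time: since no chain joins vertices in different components of $G$, both the positively and the negatively connected components refine the connected components of $G$.

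\emph{Positive connection.} Concatenating two positive chains yields a positive chain, so ``positively connected'' (with the convention that a vertex is joined to itself by the empty chain) is an equivalence relation on the vertices of each connected component, and the positively connected components are its classes with the induced edges. In an unbalanced connected component Theorem~\ref{31} gives sign connection, hence every pair of vertices is joined by a positive chain and the whole component is one positively connected component. In a balanced connected component $H$, switch by the Harary set $W$ so that all edges become positive (Lemma~\ref{balance}, Definition~\ref{harbi}); since traversing a chain from $x$ to $y$ crosses the cut $(W,V-W)$ a number of times of the same parity as $[x\in W]+[y\in W]$, switching by $W$ multiplies the sign of that chain by $-1$ exactly when one of $x,y$ lies in $W$ and the other does not. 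As every chain in the switched $H$ is positive and, by Lemma~\ref{balance}, all chains from $x$ to $y$ in $H$ have the same sign, that common sign is $+1$ iff $x,y$ lie on the same side of $\{W,V-W\}$. Therefore, if $H$ is not all positive both sides are nonempty and the induced subgraphs on $W$ and on $V-W$ are the positively connected components inside $H$; if $H$ is all positive the Harary bipartition is trivial and $H$ itself is a single positively connected component. Collecting the three cases proves the first statement.

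\emph{Negative connection.} Here ``$u$ and $v$ are negatively connected'' is not transitive, so the first step is to check that the relaxed relation of Definition~\ref{posneg-conn} --- write $u\sim v$ for ``$u=v$, or $u$ and $v$ are negatively connected, or some $w$ is negatively connected to both'' --- really is an equivalence relation, so that its classes are exactly the negatively connected components. Reflexivity and symmetry are clear; transitivity reduces to a short case analysis built on the facts that a negative chain followed by a negative chain is positive while a negative chain followed by a positive chain is negative. For instance, if $u,v$ are both negatively connected to $w_1$ and $v,x$ are both negatively connected to $w_2$, the route $u\to w_1\to v\to w_2$ is a negative chain, so $w_2$ witnesses $u\sim x$; the remaining combinations of the defining alternatives are handled the same way. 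Granting this, split again by connected component: an unbalanced component is sign connected (Theorem~\ref{31}), so all its vertex pairs are negatively connected and it is one negatively connected component; for a balanced component $H$, switch it to all positive, so that (by the chain-sign computation above) a negative chain from $x$ to $y$ exists iff $x,y$ lie on opposite sides of the Harary bipartition. If $H$ is not all positive both sides are nonempty, so two vertices on opposite sides are negatively connected and two on the same side are both negatively connected to any chosen vertex on the other side; hence $H$ is a single negatively connected component. If $H$ is all positive there is no negative chain in $H$ at all, so the $\sim$-classes are the individual vertices. This proves the second statement.

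The step I expect to be the real obstacle is the negative-connection part: verifying that the relaxed relation of Definition~\ref{posneg-conn} is transitive (hence an equivalence relation), and then reading off from the Harary-switching description of chain signs precisely which pairs inside a balanced component are $\sim$-related. The positive-connection argument and the reduction to individual connected components are routine once the effect of switching on the sign of an open chain has been recorded.
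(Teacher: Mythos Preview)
Your proof is correct and follows essentially the same route as the paper's: split by connected component of $G$, invoke Theorem~\ref{31} on the unbalanced components, and use the Harary bipartition (via switching) to read off chain signs in the balanced ones. The one substantive addition you make is the explicit transitivity check for the relaxed negative relation $\sim$ of Definition~\ref{posneg-conn}; the paper simply exhibits the maximal subgraphs directly and leaves this verification to the reader, so your version is slightly more complete but not a different argument.
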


\begin{proof}
A sign-connected component of $G_\sigma$ is both positively connected and negatively connected.  Therefore (by Theorem \ref{31}), every unbalanced connected component is a positively connected component and a negatively connected component.  

A balanced connected component that is all positive is positively connected, and none of its vertices are negatively connected.  Thus, the component is a positively connected component and each vertex is a negatively connected component.

If a balanced connected component $G_\sigma'$ is not all positive, let $W,\ V-W$ be the sets of its Harary bipartition (Definition \ref{harbi}).  The negative edges are the edges connecting $W$ to $V-W$.  A positive chain that begins in $W$ must end in $W$, so the vertices of $W$ are not positively connected to those of $V-W$, but since there is a path joining any two vertices of the component, all vertices in $W$ are positively connected to each other.  Therefore, $W$ generates a positively connected component of $G_\sigma$.  Similarly, $V-W$ also generates a positively connected component.  

Since in $G_\sigma'$ a negative chain connects each vertex in $W$ to each vertex in $V-W$, $G_\sigma'$ is a negatively connected component.
\end{proof}

\begin{remark}\label{posneg-paths}
{\rm 
If the chains are required to be paths, the definition of positive (resp., negative) connection is nontransitive; therefore it is necessary to take the transitive closure.  There is no difference between this transitive closure and positive (resp., negative) connection defined by chains.
}
\end{remark}


\end{document}